\newtheorem{theorem}{Theorem}[section]
\newtheorem{lemma}[theorem]{Lemma}
\newtheorem{corollary}[theorem]{Corollary}
\newtheorem{proposition}[theorem]{Proposition}
\newtheorem{remark}[theorem]{Remark}
\newtheorem{definition}[theorem]{Definition}
\def\ra{\rightarrow}
\newcommand{\Sym}{{\operatorname{Sym}}}
\newcommand{\restr}[1]    {\vert_{#1}}
\title{Higher Gaussian maps for special classes of curves}
\author{D. Faro}
\address{Dario Faro  \\ Universit\`a degli Studi di Pavia  \\ Dipartimento di Matematica \\ Via Ferrata 1  \\ 27100 Pavia, Italy  }
 \email{dario.faro@unipv.it}
\author{P. Frediani}
\address{Paola Frediani  \\ Universit\`a degli Studi di Pavia  \\ Dipartimento di Matematica \\ Via Ferrata 1  \\ 27100 Pavia, Italy  }
 \email{paola.frediani@unipv.it}
 \author{A. Lacopo}
 \address{Antonio Lacopo \\ Universit\`a degli Studi di Pavia  \\ Dipartimento di Matematica \\ Via Ferrata 1  \\ 27100 Pavia, Italy  }
 \email{antonio.lacopo01@universitadipavia.it}
\begin{document}

\begin{abstract}
In this paper we study higher Gaussian (or Wahl) maps for the canonical bundle of certain smooth projective curves. More precisely, we determine the rank of  higher Gaussian maps of the canonical bundle for plane curves, for curves contained in certain linear systems in a surface given by a product of two curves and for curves contained in a sufficiently ample line bundle on an Enriques surface. 
\end{abstract}

\thanks
{D. Faro, P. Frediani and A. Lacopo are members of GNSAGA (INdAM) and are partially supported by PRIN project {\em Moduli spaces and special varieties} (2022).}

\maketitle

\section{Introduction}
Let $C$ be a smooth projective curve. The $k$-th Gaussian map (or Wahl map) is the map  
$$
 \gamma^k_C: H^0(C\times C,I^k_{\Delta_C}\otimes K_C^{\boxtimes 2})\rightarrow H^0(C,K_C^{\otimes (k+2)}),
$$
induced by  the short exact sequence $ 0 \rightarrow I^{k+1}_{\Delta_C} \rightarrow
I^k_{\Delta_C} \rightarrow K^{\otimes k}_C \rightarrow 0$.  Wahl showed in \cite{wahl1} that if $C$ sits on a $K3$ surface, then  $\gamma^1_C$ is not surjective (see also \cite{bm}). On the other hand, Ciliberto, Harris and Miranda proved that for the general curve of genus $g \geq 10$, $g \neq 11$, $\gamma^1_C$ is surjective (see \cite{chm}, \cite{voi} for another proof). Actually also the converse holds: a Brill-Noether Petri general curve with non surjective (first) Wahl map lies on a  $K3$ surface or on a limit thereof, as shown in \cite{abs}. More generally,  the (co)rank of the first Wahl map is related to the extendability of the curve as a linear section of a higher dimensional variety, as shown in \cite{cds}. 

The main purpose of this work is to determine the (co)rank of higher Wahl maps for certain classes of smooth projective curves. The study of  higher Gaussian maps has been carried out mainly in two directions. On the one hand, as in the case $k=1$, to give obstructions for the extendability of curves. This is the case of the main result of \cite{cfp}, where it is shown  that if a curve lies on an abelian surface, then the second Wahl map is not surjective (whereas the first is surjective if the curve belongs to a sufficiently positive linear system). This again has to be  compared with the main result in (\cite{ccm}), which states the surjectivity of the second Wahl map for the general curve of genus gretater than or equal to $18$.

On the other hand, the study of higher Wahl maps has been applied to the study of the local geometry of the Torelli locus inside the moduli space of principally polarized abelian varieties. More precisely, in \cite{fredianihigher} and \cite{ffl} it is shown that there is a relation between the second fundamental form of the Torelli morphism and higher even Wahl maps. This generalized the main result in \cite{cpt} concerning the second Wahl map.

The first main result of this work  is the computation of the (co)rank of higher Wahl maps for plane curves.
\begin{theorem}
\label{theorema} (see Theorem \ref{mainmain})
If $C$ is a smooth plane curve of degree $d \geq 6$, then for all  $0 \leq k \leq \frac{d-6}{2}$, we have 
$$rank(\gamma^k_C) = \frac{2k+3}{2}(d(d-3)-k(k+3)) = \frac{2k+3}{2}(2g-2 -k(k+3)),$$ 
$$corank(\gamma^k_C) = k(k+3)\frac{2k+3}{2}.$$
\end{theorem}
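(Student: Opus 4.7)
The plan is to exploit the adjunction formula $K_C=\mathcal{O}_C(d-3)$, which reduces every cohomology in sight to line-bundle cohomology on $\mathbb{P}^2$. The starting point is the short exact sequence
\[
0\to I^{k+1}_{\Delta_C}\otimes K_C^{\boxtimes 2}\to I^{k}_{\Delta_C}\otimes K_C^{\boxtimes 2}\to K_C^{k+2}\to 0
\]
on $C\times C$, from which $\mathrm{coker}(\gamma^k_C)\hookrightarrow H^1(C\times C,\,I^{k+1}_{\Delta_C}\otimes K_C^{\boxtimes 2})$. Since $h^0(K_C^{k+2})=(2k+3)(g-1)=\tfrac{(2k+3)d(d-3)}{2}$ is already given by Riemann-Roch, pinning down the corank is equivalent to controlling this $H^1$.

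To access the sections of $I^k_{\Delta_C}\otimes K_C^{\boxtimes 2}$ and the map $\gamma^k_C$ explicitly, I would use the restriction surjections
\[
H^0(\mathbb{P}^2,\mathcal{O}(m(d-3)))\twoheadrightarrow H^0(C,K_C^{m}),
\]
which hold for all $m\ge 1$ thanks to $H^1(\mathbb{P}^2,\mathcal{O}(a))=0$ for every $a\in\mathbb{Z}$. This yields explicit polynomial bases for the source and target of $\gamma^k_C$ in terms of homogeneous forms in three variables, and turns the Wahl map into a concrete iterated-derivative operator on bihomogeneous polynomials whose rank can be attacked by linear algebra. In parallel, the cokernel contribution coming from the plane itself is controlled by the Euler sequence $0\to\Omega^1_{\mathbb{P}^2}\to\mathcal{O}(-1)^3\to\mathcal{O}\to 0$ and Bott's formulas, giving $h^{\bullet}(\mathbb{P}^2,S^j\Omega^1_{\mathbb{P}^2}(m))$ explicitly for every $j,m$, and by the conormal sequence $0\to\mathcal{O}_C(-d)\to\Omega^1_{\mathbb{P}^2}|_C\to K_C\to 0$ whose symmetric powers filter $S^j\Omega^1_{\mathbb{P}^2}|_C$ by the line bundles $K_C^{j-i}\otimes\mathcal{O}_C(-id)$, keeping track of what is gained or lost when passing from $\mathbb{P}^2$ to $C$.

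The main technical obstacle will be to show that the range $k\le(d-6)/2$ is exactly what forces the vanishing of all the interfering higher cohomology groups on $\mathbb{P}^2$ and on $C$, so that the above machinery produces a sharp equality rather than merely a bound. Within this range, the corank $\tfrac{k(k+3)(2k+3)}{2}$ is expected to appear as the dimension of a single residual cohomology group on $\mathbb{P}^2$, morally measuring the failure of the ambient Gaussian map for $(\mathbb{P}^2,\mathcal{O}(d-3))$ to induce $\gamma^k_C$ surjectively, and thus reflecting the non-extendability of the plane curve beyond $\mathbb{P}^2$ at the $k$-th Wahl level.
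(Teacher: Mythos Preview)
Your outline captures roughly half of the paper's argument, and correctly so. The restriction surjection from $H^0(\mathbb{P}^2,\mathcal{O}(m(d-3)))$, the Euler sequence/Bott formulas for $h^\bullet(S^j\Omega^1_{\mathbb{P}^2}(m))$, and the symmetric powers of the conormal sequence are exactly the ingredients the paper uses to factor $\gamma^k_C$ through an ambient Gaussian map $\gamma^k_L$ on $\mathbb{P}^2$ followed by restriction $p_1$ and a conormal-quotient map $p_2$. In the range $k\le (d-6)/2$ one gets surjectivity of $\gamma^k_L$ and $p_1$, and $\operatorname{corank}(p_2)=h^1\bigl(S^{k-1}\Omega^1_{\mathbb{P}^2|C}(d-6)\bigr)=h^2\bigl(S^{k-1}\Omega^1_{\mathbb{P}^2}(-6)\bigr)=\tfrac{k(k+3)(2k+3)}{2}$. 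This is your ``single residual cohomology group on $\mathbb{P}^2$'', and it yields $\operatorname{rank}(\gamma^k_C)\ge \operatorname{rank}(p_2)$.

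What is missing from your plan is the reverse inequality $\operatorname{corank}(\gamma^k_C)\ge \tfrac{k(k+3)(2k+3)}{2}$, and this is not a matter of ``vanishing of interfering cohomology'' alone. Your opening observation $\operatorname{coker}(\gamma^k_C)\hookrightarrow H^1(I^{k+1}_{\Delta_C}\otimes K_C^{\boxtimes 2})$ bounds the corank from \emph{above}, which is the direction you already have; it does not give the needed lower bound. The paper proves instead that $\operatorname{im}(\gamma^k_C)\subseteq \operatorname{im}(p_2)$, and this requires genuinely new input: one introduces the intermediate sheaf $G^k:=p_*(I^k_{\Delta_{\mathbb{P}^2}}\otimes q^*K_C)$ on $\mathbb{P}^2$, proves by induction that its restriction to $C$ sits in exact sequences linking it to $R^k_C$ and to $S^{k-1}\Omega^1_{\mathbb{P}^2|C}(-3)$, and then shows that the coboundary map $e^k_C:H^0(R^k_C\otimes K_C)\to H^1(S^{k-1}\Omega^1_{\mathbb{P}^2|C}(d-6))$ is zero. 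That last vanishing is obtained by dualizing, comparing $G^k$ with the analogous bundle $R^k_{\mathbb{P}^2}$ built from $\mathcal{O}_{\mathbb{P}^2}(d-3)$ rather than $K_C$, and proving $H^1$ vanishes for the duals of the successive cokernels $\mathcal{H}^j=G^j/R^j_{\mathbb{P}^2}$. None of this structure is visible from the Euler/conormal sequences or from writing $\gamma^k_C$ as a differential operator on bihomogeneous polynomials; your sketch, as it stands, would produce only the inequality $\operatorname{corank}(\gamma^k_C)\le \tfrac{k(k+3)(2k+3)}{2}$.
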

We observe that the corank of $\gamma^k_C$ is independent of the degree of the plane curve, as previously shown  by Wahl in the case $k=1$ (\cite[Remark 4.9]{wahl}). 

The second main result gives a sufficient condition for the surjectivity of higher Gaussian maps  for curves lying in the product of two curves. This generalizes results of Wahl and Colombo - Frediani for cases $k=1$ (\cite[Theorem 4.11]{wahl}), and $k=2$ respectively (\cite[Theorem 3.1]{cf-Michigan}).
\begin{theorem}
\label{propsurj} (see Theorem \ref{surjffsrusi})
Let  $k \geq 2$ be an integer. Let $C_i$, $i=1,2$ be smooth projective algebraic curves of genus $g_i$, let $D_i$ be an effective divisor on $C_i$ of degree $d_i$. Denote by $p_i:X:=C_1 \times C_2 \ra C_i$  the projections and suppose that
\begin{itemize}
\item[1.] $g_1 \geq 2, g_2 \geq 1 $ or $g_1 \geq 1, g_2 \geq 2 $,  and $d_i \geq kg_i + k+3$ for $i=1,2$ or,

\item[2.] $g_1=0$, $g_2 \geq 2$, $d_1 > 2(k+1)$, $d_1 > \frac{kd_2}{g_2-1}$, $d_2 \geq kg_2 +k+3 $.

\end{itemize}
 Then for any irreducible smooth curve $C$ in the linear system  $ |p_1^*(D_1) \otimes p_2^*(D_2)|$,  $\gamma^k_{C} $ is surjective. 
\end{theorem}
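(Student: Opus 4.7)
The plan is to adapt the method used by Wahl in \cite{wahl} for $k=1$ and by Colombo--Frediani in \cite{cf-Michigan} for $k=2$: reduce the surjectivity of $\gamma^k_C$ to a Gaussian-type map on the ambient surface $X=C_1\times C_2$, and then split the latter, using the product structure, into higher Gaussian maps on each factor. First, by adjunction on the smooth surface $X$, the line bundle
$$L:=K_X\otimes\mathcal{O}_X(C)=p_1^*(K_{C_1}\otimes \mathcal{O}_{C_1}(D_1))\otimes p_2^*(K_{C_2}\otimes \mathcal{O}_{C_2}(D_2))$$
restricts to $K_C$ on $C$, so $\gamma^k_C$ has target $H^0(C,(L|_C)^{\otimes(k+2)})$.

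On the surface $X$ itself, one has the analogous ``$k$-th Gaussian'' map
$$\Phi^k_{X,L}\colon H^0\bigl(X\times X,\, I^k_{\Delta_X}\otimes (L\boxtimes L)\bigr)\;\longrightarrow\;H^0\bigl(X,\,L^{\otimes 2}\otimes\Sym^k\Omega^1_X\bigr),$$
induced by the conormal filtration on $X\times X$. Restricting from $X\times X$ to $C\times C$ via $0\to\mathcal{O}_X(-C)\to\mathcal{O}_X\to\mathcal{O}_C\to 0$, and tensoring with the source and target sheaves of $\Phi^k_{X,L}$, one obtains a commutative diagram whose vertical arrows compare $\Phi^k_{X,L}$ with $\gamma^k_C$. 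A diagram chase then shows that surjectivity of $\gamma^k_C$ follows from surjectivity of $\Phi^k_{X,L}$ together with a finite list of $H^1$-vanishings on $X$ for sheaves of the form $L^{\otimes 2}(-C)\otimes\Sym^k\Omega^1_X$ and analogous twists of the source.

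Since $\Omega^1_X=p_1^*K_{C_1}\oplus p_2^*K_{C_2}$ is a sum of line-bundle pullbacks,
$$\Sym^k\Omega^1_X\;=\;\bigoplus_{a+b=k}p_1^*K_{C_1}^{\otimes a}\otimes p_2^*K_{C_2}^{\otimes b},$$
and by the Künneth formula on $X\times X$ the map $\Phi^k_{X,L}$ decomposes into summands built from higher Gaussian maps of suitably twisted line bundles on each factor $C_i$. Wahl's classical surjectivity criterion for higher Gaussian maps of a single line bundle on a smooth curve can then be applied summand by summand: the hypotheses $d_i\geq kg_i+k+3$ for $i=1,2$ in case (1) are exactly what is needed to guarantee, on each factor, the positivity required by that criterion. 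The companion $H^1$-vanishings reduce, via Künneth and Serre duality, to the same degree estimates.

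Case (2), with $g_1=0$, has to be handled separately, since higher Gaussian maps on $\mathbb{P}^1$ and their cohomology behave differently. In the product decomposition above, pieces with $a>0$ on the $\mathbb{P}^1$-side are controlled via direct Serre duality on $\mathbb{P}^1\times C_2$; the extra conditions $d_1>2(k+1)$ and $d_1>kd_2/(g_2-1)$ are the sharp slope and degree bounds that make these $H^1$'s vanish and reduce the remaining pieces to surjectivity statements on $C_2$ already covered by case (1). The main obstacle will be the bookkeeping of the second step, namely the diagram chase linking $\gamma^k_C$ with $\Phi^k_{X,L}$, together with the uniform-in-$k$ control of the $k+1$ Künneth summands in the third step, where one has to identify precisely which summands require surjectivity of a factor Gaussian map and which require a cohomological vanishing, and then match the hypotheses $d_i\geq kg_i+k+3$ against both requirements simultaneously.
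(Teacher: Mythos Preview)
Your plan is essentially the one the paper follows: factor through the surface Gaussian map $\gamma^k_{X,H}$ (your $\Phi^k_{X,L}$), decompose the latter via the product structure into a direct sum of tensor products $\gamma^i_{C_1,L_1}\otimes\gamma^j_{C_2,L_2}$ with $i+j=k$, and then check the cohomological vanishings needed to pass from $X$ to $C$.

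Two points deserve sharpening. First, the factor surjectivity is obtained not from Wahl but from the Bertram--Ein--Lazarsfeld criterion (Theorem~\ref{einlaz}); the bound $d_i\geq kg_i+k+3$ is exactly what makes $L_i=K_{C_i}(D_i)$ satisfy their hypotheses for all $0\leq i\leq k$. Second, your description of the diagram chase lumps everything into ``$H^1$-vanishings on $X$'', but in the paper the passage from $X$ to $C$ splits cleanly into two maps: a restriction $p_1$ to $H^0(C,(S^k\Omega^1_X\otimes H^{\otimes 2})|_C)$, and then the projection $p_2$ coming from the $k$-th symmetric power of the conormal sequence, landing in $H^0(C,K_C^{\otimes(k+2)})$. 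The condition $d_1>2(k+1)$ in case~(2) is what makes $p_1$ surjective, but the condition $d_1>kd_2/(g_2-1)$ is \emph{not} about pieces of $\Phi^k_{X,L}$ on the $\mathbb{P}^1$-side as you suggest: it is precisely the degree inequality on $C$ that forces $H^1(C,S^{k-1}\Omega^1_{X|_C}\otimes K_C^{\otimes 2}(-C))=0$, hence $p_2$ surjective. Once you isolate $p_2$ explicitly, the bookkeeping you anticipate becomes straightforward.
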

This in turn gives another proof of the surjectivity of higher Gaussian maps for the general curve of genus $g$, where $g$ can take an infinite number of values. The surjectivity of higher Gaussian maps for the general curve of genus big enough was already shown  by Rios Ortiz in \cite{ro}. 

\begin{corollary}
Let $k \geq 2$. For all $g_i$ and $d_i$ satisfying the hypothesis of  Theorem \ref{propsurj},  the general curve of genus  
\begin{equation}
g=1+(g_2-1)d_1+(g_1-1)d_2+d_1d_2,
\end{equation}
has surjective $k$th Gaussian map.
\end{corollary}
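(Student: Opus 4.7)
The proof is a classical ``one example plus semicontinuity'' argument, in which the surjectivity established in Theorem \ref{propsurj} for a single curve of genus $g$ on a product surface $X = C_1 \times C_2$ is propagated to the generic point of $\mathcal{M}_g$ by semicontinuity of the cokernel of $\gamma^k$ in families. The only additional input is the adjunction formula on $X$, used to verify that smooth curves in the linear system $|p_1^*D_1 \otimes p_2^*D_2|$ have the genus stated.

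For the genus computation, fix smooth curves $C_i$ of genus $g_i$ and effective divisors $D_i$ of degree $d_i$ satisfying the numerical hypotheses of Theorem \ref{propsurj}. These hypotheses make $L := p_1^*D_1 \otimes p_2^*D_2$ sufficiently positive on $X$, so Bertini yields an irreducible smooth $C \in |L|$. Using $K_X = p_1^*K_{C_1} \otimes p_2^*K_{C_2}$ together with the intersection numbers $(p_i^*D_i)^2 = 0$, $p_1^*D_1 \cdot p_2^*D_2 = d_1 d_2$, and $p_i^*D_i \cdot p_j^*K_{C_j} = d_i(2g_j - 2)$ for $i \neq j$, adjunction gives
\[
2g(C) - 2 \;=\; C^2 + C \cdot K_X \;=\; 2 d_1 d_2 + d_1(2g_2 - 2) + d_2(2g_1 - 2),
\]
equivalent to $g(C) = 1 + (g_2-1)d_1 + (g_1-1)d_2 + d_1 d_2$. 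Theorem \ref{propsurj} then guarantees that $\gamma^k_C$ is surjective for this particular $C$.

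To extend this to the general curve of genus $g$, I would consider a smooth family $\pi: \mathcal{C} \to B$ of smooth projective curves containing $C$ as a fibre and dominating an open neighbourhood of $[C] \in \mathcal{M}_g$. The relative Gaussian map assembles into a morphism of coherent sheaves on $B$ whose cokernel is upper semicontinuous on the base, so the locus where $\gamma^k$ fails to be surjective is closed in $B$. Theorem \ref{propsurj} exhibits a point in its complement, hence its image in $\mathcal{M}_g$ is a proper closed subvariety, and the general curve of genus $g$ lies outside it. The only step requiring real care is the construction of the relative Gaussian map as a morphism of coherent sheaves of locally constant rank, which I would carry out by working with the relative ideal sheaf of the diagonal in $\mathcal{C} \times_B \mathcal{C}$; for $g \geq 2$, which is forced by the hypotheses, the cohomology dimensions $h^0(K_C^{\otimes(k+2)}) = (2k+3)(g-1)$ and the inductively computed $h^0(I^k_{\Delta_C} \otimes K_C^{\boxtimes 2})$ are constant in the family, and this bookkeeping is the mildest obstacle in an otherwise routine argument.
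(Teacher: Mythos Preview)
Your proposal is correct and matches the paper's own treatment, which in fact gives no explicit proof of this corollary: the genus formula and the existence of smooth irreducible $C$ in the linear system are recorded in Remark~\ref{remarkdsdsre4}, and the passage from one surjective example to the general curve is left as the standard semicontinuity argument you spell out. The only point where the paper is slightly more careful is the existence step: rather than asserting that the hypotheses make $L$ ``sufficiently positive'' for Bertini, the paper notes that one should take the $D_i$ general on $C_i$, so that $d_i \geq g_i+1$ and Brill--Noether theory guarantee $|p_1^*D_1 \otimes p_2^*D_2|$ is base-point-free; your formulation would benefit from this small clarification, but the argument is otherwise the intended one.
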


The last part of the paper deals with the study of canonical Gaussian maps for curves on Enriques surfaces, and it is based on previous results by Faro and Spelta for Prym-canonical Gaussian maps \cite{farospelta}. 

The main result is the following 

\begin{theorem}
\label{teoenriques} (see Theorem \ref{enriques})
Let $X$ be an unnodal Enriques surface and $k  \in \mathbb{N}$. If $C$ is a smooth curve with $\phi(\mathcal{O}_X(C)) > 4(k+2)  $, then $\gamma^k_C$ is surjective. If $k=1$ it is sufficient to require. $\phi(\mathcal{O}_X(C)) > 6 $.
\end{theorem}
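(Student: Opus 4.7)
The plan is to follow the strategy developed by Faro--Spelta in \cite{farospelta} for the Prym-canonical Gaussian map, modified to handle the canonical case. The crucial observation is that by adjunction $K_C \cong (K_X+C)\restr{C}$, so setting $\mathcal{L}:=K_X\otimes \mathcal{O}_X(C)$ one has $\mathcal{L}\restr{C}\cong K_C$. Since $K_X$ is $2$-torsion on an Enriques surface it is numerically trivial, so $\phi(\mathcal{L})=\phi(\mathcal{O}_X(C))$, and moreover $H^i(X,K_X)=0$ for every $i$; this will allow us to transfer cohomological information freely between $\mathcal{L}$ and $\mathcal{O}_X(C)$.

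First I would lift $\gamma^k_C$ to a Gaussian map on the surface. From the standard sequence $0\to I_{\Delta_X}^{k+1}\to I_{\Delta_X}^{k}\to \Sym^k\Omega^1_X\to 0$ on $X\times X$, tensoring with $\mathcal{L}\boxtimes\mathcal{L}$ yields
$$\tilde{\gamma}^k_X: H^0(X\times X,\, I_{\Delta_X}^k\otimes \mathcal{L}\boxtimes\mathcal{L})\longrightarrow H^0(X,\, \mathcal{L}^{\otimes 2}\otimes \Sym^k\Omega^1_X).$$
The conormal exact sequence $0\to \mathcal{O}_X(-C)\restr{C}\to \Omega^1_X\restr{C}\to \Omega^1_C\to 0$ and its symmetric powers produce a natural surjection $\Sym^k\Omega^1_X\restr{C}\twoheadrightarrow (\Omega^1_C)^{\otimes k}$, which after twisting by $\mathcal{L}^{\otimes 2}\restr{C} = K_C^{\otimes 2}$ lands surjectively in $K_C^{\otimes(k+2)}$. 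A commutative diagram comparing $\gamma^k_C$ with $\tilde{\gamma}^k_X$ then reduces surjectivity of $\gamma^k_C$ to surjectivity of $\tilde{\gamma}^k_X$ together with two restriction/quotient surjections on global sections.

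The second step is to verify these surjectivities via cohomological vanishings on $X$ and on $X\times X$. The groups one must kill are of the form $H^1(X,\, \Sym^j\Omega^1_X\otimes \mathcal{L}^{\otimes 2}\otimes \mathcal{O}_X(-C))$ and analogous groups coming from the sequence of ideals of the diagonal, for $0\le j\le k$. On an unnodal Enriques surface these vanishings are controlled by the $\phi$-function: following \cite{farospelta}, one decomposes $\Sym^j\Omega^1_X$ by means of conormal sequences along generic elliptic pencils, reducing each step to a vanishing for line bundles whose intersection with every elliptic half-pencil is sufficiently large. Iterating this procedure $k$ times, together with the additional twists coming from $\mathcal{L}\otimes \mathcal{O}_X(-C)\cong K_X$, yields exactly the bound $\phi(\mathcal{O}_X(C))>4(k+2)$.

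The main obstacle is the control of the higher cohomology of the symmetric powers $\Sym^j\Omega^1_X$ twisted by $\mathcal{L}^{\otimes 2}$: since $\Omega^1_X$ is a rank-two bundle whose positivity on an Enriques surface is governed jointly by the polarization and the elliptic pencils, no direct vanishing theorem applies, and the filtration-by-pencils argument of \cite{farospelta} is essential. In the case $k=1$ only $\Omega^1_X$ itself (not its higher symmetric powers) is involved, and the sharper Akizuki--Nakano-type vanishing available for $\Omega^1_X$ twisted by a nef and big line bundle on unnodal Enriques surfaces bypasses the iterated filtration argument entirely, yielding the improved threshold $\phi(\mathcal{O}_X(C))>6$.
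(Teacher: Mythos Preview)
Your overall architecture is exactly the one the paper uses: set $H=K_X\otimes\mathcal{O}_X(C)$, build the commutative diagram relating $\gamma^k_C$ to the surface Gaussian map $\gamma^k_{X,H}$ and the two restriction maps $p_1,p_2$, and reduce to three surjectivities. You also correctly identify that the vanishings $H^1(S^j\Omega^1_X(C))=0$ governing $p_1$ and $p_2$ are supplied by the Faro--Spelta results in \cite{farospelta}; the paper does precisely this, citing \cite[Lemma~4.4]{farospelta} and \cite[4.7]{farospelta}.

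The substantive gap is in your treatment of the surjectivity of $\gamma^k_{X,H}$ itself. You gesture at ``analogous groups coming from the sequence of ideals of the diagonal'', but controlling $H^1(X\times X, I_{\Delta_X}^{k+1}\otimes H\boxtimes H)$ directly on $X\times X$ is not straightforward, and the filtration-by-pencils argument you invoke lives on $X$, not on $X\times X$. The paper instead invokes \cite[Theorem~A]{ro}: surjectivity of $\gamma^k_{X,H}$ follows from the single vanishing $H^1\bigl(X^{[2]},\,\tilde{H}-(k+2)B\bigr)=0$ on the Hilbert scheme of two points, which in turn follows from Kodaira vanishing once one knows that $\widetilde{H-K_X}-(k+2)B$ is ample; this ampleness is \cite[Corollary~4.2]{farospelta} and is where the bound $\phi>4(k+2)$ enters. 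You should replace the vague diagonal argument with this Hilbert-scheme step.

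A smaller point: your explanation of the improved bound for $k=1$ is off. The gain does not come from an Akizuki--Nakano statement for $\Omega^1_X$; rather, for $k=1$ the $p_2$ step only requires the vanishing of $H^1\bigl(S^{0}\Omega^1_{X|C}(C)\bigr)=H^1(\mathcal{O}_C(C))$, which involves no symmetric power of $\Omega^1_X$ at all and is handled by \cite[Remark~4.5]{farospelta} under the weaker hypothesis $\phi>6$.
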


Here the function $\phi$ is a measure of the positivity of a line bundle on an Enriques surface (see section \ref{section5} for the definition).

The structure of the paper is as follows. In section \ref{section2} we recall the definition of the Gaussian maps and some useful variations. In section  \ref{section3} we consider the situation of plane curves, proving  Theorem \ref{theorema}. In section  \ref{section4} we prove Theorem \ref{propsurj}. In  section  \ref{section5} we give the proof of  Theorem \ref{teoenriques}.

\section{Gaussian maps}
\label{section2}
Let $Y$ be a smooth complex projective variety, let $\Delta_Y\subset
Y\times Y$ be the diagonal and let $p,q: Y \times Y \rightarrow Y$ be the two projections. Let $L$ and $M$ be line bundles on $Y$.
For a non-negative integer $k$, the \emph{k-th Gaussian map}
associated with these data is the restriction map to the diagonal
\begin{equation}\label{gaussian1}H^0(Y\times Y,I^k_{\Delta_Y}\otimes p^*L\otimes q^*M )  \stackrel{\gamma^k_{L,M}}\longrightarrow
H^0(Y,{I^k_{\Delta_Y}}_{|\Delta_Y}\otimes L\otimes M)\cong
H^0(Y,S^k\Omega_Y^1\otimes L\otimes M).
\end{equation}
The exact sequence
\begin{equation}
\label{Ik} 0 \rightarrow I^{k+1}_{\Delta_Y} \rightarrow
I^k_{\Delta_Y} \rightarrow S^k\Omega^1_Y \rightarrow 0,
\end{equation}(where $S^k\Omega^1_Y$ is identified to its image via the diagonal map), tensored by $p^*L \otimes q^*M$, shows that the domain of the $k$-th
Gaussian map is the kernel of the previous one:
$$\gamma^k_{L,M}:
ker \gamma^{k-1}_{L,M}\rightarrow H^0(S^k\Omega_Y^1\otimes L\otimes
M).$$ In our applications, we will  also assume that the two line bundles
$L$ and $M$ coincide.

Assume $Z$ is a smooth irreducible divisor on $Y$. 
We consider a variant of Gaussian maps that  appear in \cite{voi}, \cite{cfp}. They are defined
 as follows: let $L$ be a line bundle on $Y$, and let $M_Z$ be a
 line bundle on $Z$, seen as a sheaf on $Y$. The $k$-th order Gaussian map
 associated with these data is
\begin{equation}\label{gaussian2}
 \gamma_{L,M_Z}^k:H^0(Y\times Y,I^k_{\Delta_Y}\otimes p^*L\otimes
 q^* M_Z)\rightarrow H^0(Y,S^k{\Omega^1_Y}\otimes L\otimes M_Z).
 \end{equation}
 Sequence (\ref{Ik}), tensored with
 $p^* L\otimes q^* M_Z$ remains exact. In fact, since $q^*M_Z$ is locally free on $Y \times Z$, and $I^k_{\Delta_Y}/I^{k+1}_{\Delta_Y}$ is locally free on $\Delta_Y$, it suffices to show that $tor_1^{{\mathcal O}_{Y \times Y}}({\mathcal O}_{\Delta_Y}, {\mathcal O}_{Y \times Z})=0.$
Consider the following exact sequence   \  \
 $0 \rightarrow I_{Y \times Z} \rightarrow
  {\mathcal O}_{Y \times Y} \rightarrow {\mathcal O}_{Y \times Z} \rightarrow 0,$ \
 and tensor it by ${\mathcal O}_{\Delta_Y}$. 
 We obtain $$0 \rightarrow    tor_1^{{\mathcal O}_{Y \times Y}}({\mathcal O}_{\Delta_Y}, {\mathcal O}_{Y \times Z})  \rightarrow  I_{Y \times Z} \otimes  {\mathcal O}_{\Delta_Y} \rightarrow
  {\mathcal O}_{\Delta_Y} \rightarrow {\mathcal O}_{\Delta_Z} \rightarrow 0,$$
  where $\Delta_Z$ is the diagonal of $Z$ in $Y \times Z$. Since   $I_{Y \times Z} \otimes  {\mathcal O}_{\Delta_Y}$ is the ideal sheaf of $\Delta_Z$ in $\Delta_Y$, we get $tor_1^{{\mathcal O}_{Y \times Y}}({\mathcal O}_{\Delta_Y}, {\mathcal O}_{Y \times Z})=0$.
Hence, as above, the domain of
 these $k$-th Gaussian maps is the kernel of the previous ones:
\begin{equation}\gamma^k_{L,M_Z}: ker \gamma^{k-1}_{L,M_Z}\rightarrow
H^0(S^k{\Omega^1_Y}\otimes L\otimes M_Z).
\end{equation}

Let ${\mathcal F}$ be a coherent sheaf on $Y$ which is locally free on $Z$ and apply  $p_*$ to the exact sequence \eqref{Ik} tensored by $q^*{\mathcal F}$. Setting $$R^k_{Y, {\mathcal F}}:=  p_* (I^k_{\Delta_Y} \otimes q^*{\mathcal F}),$$ we get 
\begin{equation}
\label{R}
0 \rightarrow R^{k+1}_{Y, {\mathcal F}} \rightarrow R^k_{Y, {\mathcal F}} \rightarrow S^k \Omega^1_Y \otimes {\mathcal F} \rightarrow R^1p_*(I^{k+1}_{\Delta_Y} \otimes q^*{\mathcal F}) \rightarrow R^1p_*(I^{k}_{\Delta_Y} \otimes q^*{\mathcal F})
\end{equation}
If ${\mathcal F } = L$ is a line bundle on $Y$, $R^k_{Y,L}$ are vector bundles on $Y$. Tensoring \eqref{R} by another line bundle $M$ and taking global sections we obtain the Gaussian maps defined in \eqref{gaussian1} as the maps 

\begin{equation}
\label{gaussR}
\gamma^k_{L,M}: H^0(Y, R^k_{Y,L} \otimes M) \rightarrow H^0(S^k \Omega^1_Y \otimes L \otimes M).
\end{equation}
When $L=M$ we will set $\gamma_L^k:= \gamma^k_{L,L}$. 

In the paper we will consider the following setup: let $X$ be a smooth projective surface, $H$ an ample line bundle on it, $C \in |H|$ a smooth projective curve, so by adjunction $K_C = (K_X \otimes H)_{\restr C}$. Set $L:= K_X \otimes H$. 

We have the following commutative diagram: 
\[\begin{tikzcd}
\label{general}
	H^0(X, R^k_{X,L} \otimes L)  && H^0(X,S^k\Omega^1_X(2L)) \\
	&&& H^0(C,S^k\Omega^1_{X}(2L)_{\mid_C}) \\
	H^0(C, R^k_{C, K_C} \otimes K_C) && H^0(C,K_C^{\otimes (k+2)}).
	\arrow["\gamma^{k}_{L}",from=1-1, to=1-3]
	\arrow[from=1-1, to=3-1]
	\arrow["p_1",from=1-3, to=2-4]
	\arrow["p_2",from=2-4, to=3-3]
	\arrow["\gamma^k_C",from=3-1, to=3-3]
\end{tikzcd}\]

where the left vertical map and the map $p_1$  are the restriction maps. 
The map $p_2$ is constructed as follows: let us consider the conormal bundle exact sequence
$$
\begin{tikzcd}
    0 \arrow[r] & \mathcal{O}_C(-C) \arrow[r] & \Omega^1_{X_{\mid C}} \arrow[r] & K_C \arrow[r] & 0,
\end{tikzcd}
$$
take its $k$-th symmetric power 

\begin{equation}
\label{p2}
    0 \longrightarrow  S^{k-1}\Omega^1_{{X}_{\restr C}}(-C)  \longrightarrow S^k \Omega^1_{X_{\mid C}}  \longrightarrow K_C^{\otimes k} \longrightarrow 0,
\end{equation}
 tensor it  by $K_C^{\otimes 2}$ and take global sections. 

So to study $\gamma_C^k$ we will study the maps $\gamma^k_L$, $p_1$ and $p_2$.

\section{Gaussian maps on plane curves}
\label{section3}

In this section we will deal with the following setup:
$X= {\mathbb P}^2$, $C$ will be a smooth plane curve  of degree $d$, $L = {\mathcal O}_{{{\mathbb P}^2}}(d-3)$, so that $L_{|C}= K_C$. 
We set

\begin{equation}
\label{R_Y}
R^k_{{\mathbb P}^2}:= R^k_{{\mathbb P}^2,L}=  p_* (I^k_{\Delta_{{\mathbb P}^2}} \otimes q^*({\mathcal O}_{{\mathbb P}^2}(d-3))),
\end{equation}

\begin{equation}
\label{G}
G^k:= R^k_{{\mathbb P}^2,K_C}=  p_* (I^k_{\Delta_{{\mathbb P}^2}} \otimes q^*(K_C)),
\end{equation}

\begin{equation}
\label{R_C}
R^k_C:= R^k_{C,K_C}=  p_* (I^k_{\Delta_C} \otimes q^*K_C).
\end{equation}

We denote by 
\begin{equation}
\label{gammak}
\gamma^k_C:= \gamma^k_{K_C, K_C}: H^0(C, R^k_C \otimes K_C) \rightarrow H^0(C, K_C^{\otimes (k+2)}),
\end{equation}
the $k$-th Gaussian map of the canonical bundle of the curve $C$, 
and by 
\begin{equation}
\label{gammakL}
\gamma^k_L:= \gamma^k_{L, L}: H^0({\mathbb P}^2, R^k_{{\mathbb P}^2} \otimes L) \rightarrow H^0(\mathbb{P}^2,S^k\Omega^1_{\mathbb{P}^2}(2L)),
\end{equation}
the $k$-th Gaussian map of the line bundle $L$ on ${\mathbb P}^2$. 
We want to compute the rank of $\gamma^k_C$, the $k$-th Gaussian map of the canonical bundle of the curve.



The restriction map $p_1$  in  diagram  \eqref{general} comes from the following exact sequence 

\begin{equation}
\label{projection1}
    \begin{tikzcd}
    0 \arrow[r] & S^k\Omega^1_{\mathbb{P}^2}(d-6) \arrow[r] & S^k \Omega^1_{\mathbb{P}^2}(2d-6) \arrow[r] & S^k \Omega^1_{\mathbb{P}^2_{\mid C}}(2d-6) \arrow[r] & 0.
\end{tikzcd}
\end{equation}

\begin{lemma}
Let $k \leq d-3$. Then  $\gamma^{k}_{L}$ is surjective.
\end{lemma}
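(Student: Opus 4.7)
My strategy is to reduce the surjectivity of $\gamma^k_L$ on $\mathbb{P}^2$ to a cohomology vanishing, and then exploit the positivity of $L = \mathcal{O}_{\mathbb{P}^2}(d-3)$, encoded by the hypothesis $k \le d-3$, to verify it.

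I would start by applying \eqref{R} with $Y=\mathbb{P}^2$ and $\mathcal{F} = L$: the positivity of $L$ forces the higher direct images $R^1 p_*(I^j_{\Delta_{\mathbb{P}^2}}\otimes q^*L)$ to vanish for $j\le k+1$, and \eqref{R} shortens to the short exact sequence on $\mathbb{P}^2$
$$
0 \longrightarrow R^{k+1}_{\mathbb{P}^2} \longrightarrow R^k_{\mathbb{P}^2} \longrightarrow S^k\Omega^1_{\mathbb{P}^2}(d-3) \longrightarrow 0.
$$
Tensoring by $L$ and passing to cohomology, the cokernel of $\gamma^k_L$ injects into $H^1(\mathbb{P}^2,R^{k+1}_{\mathbb{P}^2}\otimes L)$, so it is enough to establish the vanishing $H^1(\mathbb{P}^2,R^{k+1}_{\mathbb{P}^2}\otimes L)=0$.

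For this vanishing I would use the $k$-jet ampleness of $L$, equivalent to $d-3\ge k$, which makes the evaluation map $H^0(L)\otimes\mathcal{O}_{\mathbb{P}^2}\twoheadrightarrow J^k(L)$ surjective and hence provides
$$
0\longrightarrow R^{k+1}_{\mathbb{P}^2} \longrightarrow H^0(L)\otimes\mathcal{O}_{\mathbb{P}^2} \longrightarrow J^k(L)\longrightarrow 0,
$$
where $J^k(L)$ denotes the $k$-th jet bundle of $L$. Tensoring by $L$ and using $H^1(\mathbb{P}^2,L)=0$, the required vanishing reduces to the surjectivity of the natural multiplication-into-jets map $H^0(L)\otimes H^0(L)\to H^0(\mathbb{P}^2,J^k(L)\otimes L)$. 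Filtering $J^k(L)\otimes L$ by the powers of the diagonal ideal, with graded pieces $S^j\Omega^1_{\mathbb{P}^2}(2d-6)$ for $0\le j\le k$, and invoking the Bott-type vanishings $H^1(\mathbb{P}^2, S^j\Omega^1_{\mathbb{P}^2}(2d-6))=0$ valid for $j\le k\le d-3$ (a consequence of the symmetric powers of the Euler sequence on $\mathbb{P}^2$), this surjectivity is verified inductively on $k$, the base $k=0$ being the standard surjection $H^0(L)\otimes H^0(L)\twoheadrightarrow H^0(L^{\otimes 2})$ on $\mathbb{P}^2$.

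\textbf{Main obstacle.} The technical core lies in the Bott-type vanishings $H^1(\mathbb{P}^2,S^j\Omega^1_{\mathbb{P}^2}(2d-6))=0$ for $j\le d-3$: symmetric powers of $\Omega^1$ on $\mathbb{P}^2$ are not directly covered by Bott's classical formula and require a careful Koszul-type argument based on the symmetric powers of the Euler sequence. The numerical bound $k\le d-3$ is tuned exactly so that both the $k$-jet ampleness of $L$ and the required Bott vanishings are simultaneously available, which is what makes the reduction non-circular and the induction go through.
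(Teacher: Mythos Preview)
Your reduction of the surjectivity of $\gamma^k_L$ to the vanishing $H^1(\mathbb{P}^2, R^{k+1}_{\mathbb{P}^2}\otimes L)=0$, and of the latter to the surjectivity of
\[
H^0(L)\otimes H^0(L)\;\longrightarrow\; H^0(\mathbb{P}^2,\,J^k(L)\otimes L),
\]
is correct. The gap is in the inductive step. Granting the inductive hypothesis $H^0(L)^{\otimes 2}\twoheadrightarrow H^0(J^{k-1}(L)\otimes L)$ and the vanishing $H^1(S^k\Omega^1_{\mathbb{P}^2}(2d-6))=0$, the jet filtration gives only that the \emph{composition}
\[
H^0(L)^{\otimes 2}\longrightarrow H^0(J^k(L)\otimes L)\twoheadrightarrow H^0(J^{k-1}(L)\otimes L)
\]
is onto. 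To upgrade this to surjectivity onto $H^0(J^k(L)\otimes L)$ you must show that the kernel of the composition, which is $H^0(R^k_{\mathbb{P}^2}\otimes L)$, surjects onto the new graded piece $H^0(S^k\Omega^1_{\mathbb{P}^2}(2d-6))$; but that map is precisely $\gamma^k_L$. Equivalently, chasing the sequence $0\to R^{k+1}_{\mathbb{P}^2}\to R^k_{\mathbb{P}^2}\to S^k\Omega^1_{\mathbb{P}^2}\otimes L\to 0$ tensored with $L$ gives $H^1(R^{k+1}_{\mathbb{P}^2}\otimes L)\cong\coker\gamma^k_L$ once $H^1(R^k_{\mathbb{P}^2}\otimes L)=0$, so the induction is circular. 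The Bott-type vanishings you flag as the main obstacle are in fact the easy ingredient; what is missing is an independent argument at level $k$.

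The paper bypasses this entirely by citing \cite[Corollary~2.1]{ro}, where the surjectivity of $\gamma^k_L$ on a surface is deduced from a single cohomology vanishing on the Hilbert scheme of two points, of the form $H^1(X^{[2]},\tilde L-(k+2)B)=0$. That external input is exactly what your induction lacks; on $\mathbb{P}^2$ with $L=\mathcal{O}_{\mathbb{P}^2}(d-3)$ and $k\le d-3$ it can be checked directly, but it does not follow from the filtration argument you outline.
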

\begin{proof}
This follows from  \cite[ Corollary 2.1]{ro}.  
\end{proof}
Now we only have to study $p_1$ and $p_2$. First, we will show the surjectivity of $p_1$.
\begin{proposition}
\label{surjp1}
    With our assumptions, if $k\leq \frac{d-6}{2}$, then $p_1$ is surjective.
\end{proposition}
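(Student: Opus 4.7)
The plan is to translate the surjectivity of $p_1$ into a cohomological vanishing on $\mathbb{P}^2$ and then verify that vanishing via the Euler sequence.

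Taking the long exact cohomology sequence associated to \eqref{projection1}, the map $p_1$ is surjective as soon as
\[
H^1\!\bigl(\mathbb{P}^2,\, S^k\Omega^1_{\mathbb{P}^2}(d-6)\bigr) \;=\; 0,
\]
so the proposition reduces to this vanishing under the hypothesis $d-6 \geq 2k$.

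To compute this $H^1$, I resolve $S^k\Omega^1_{\mathbb{P}^2}$ by means of the Euler sequence. Recall that for any short exact sequence $0 \to L \to E \to F \to 0$ of locally free sheaves with $L$ a line bundle, there is an induced short exact sequence $0 \to L \otimes S^{k-1}E \to S^k E \to S^k F \to 0$. Applied to $0 \to \mathcal{O} \to \mathcal{O}(1)^{\oplus 3} \to T_{\mathbb{P}^2} \to 0$ and dualized, this yields the two-term locally free resolution
\[
0 \to S^k\Omega^1_{\mathbb{P}^2} \to S^k V^{\vee} \otimes \mathcal{O}(-k) \xrightarrow{\,\iota_E\,} S^{k-1}V^{\vee} \otimes \mathcal{O}(-k+1) \to 0,
\]
with $V = H^0(\mathcal{O}(1))^{\vee}$ and $\iota_E$ the contraction with the Euler vector field. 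Twisting by $\mathcal{O}(d-6)$ and using $H^1(\mathbb{P}^2, \mathcal{O}(j)) = 0$ for all $j$, the associated long exact sequence collapses to $H^1(\mathbb{P}^2, S^k\Omega^1_{\mathbb{P}^2}(d-6)) \cong \coker(\phi)$, where
\[
\phi \colon S^k V^{\vee} \otimes S^{d-6-k}V^{\vee} \longrightarrow S^{k-1}V^{\vee} \otimes S^{d-5-k}V^{\vee}, \qquad f \otimes g \longmapsto \sum_{i=0}^{2} \partial_i f \otimes x_i g.
\]

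The decisive step is the surjectivity of $\phi$ under $d - 6 \geq 2k$. The map $\phi$ is $\mathrm{GL}(V)$-equivariant, and by Pieri's rule both source and target split as multiplicity-free direct sums of Schur modules $S^{(d-6-j,\,j)}V^{\vee}$, with $j = 0, 1, \ldots, k$ in the source and $j = 0, 1, \ldots, k-1$ in the target (the Pieri range on the source being precisely governed by the hypothesis $d-6 \geq 2k$). Schur's lemma then writes $\phi$ as a direct sum of scalar maps between matching summands, and the extra piece $S^{(d-6-k,\,k)}V^{\vee}$ of the source lies in $\ker\phi$. Surjectivity thus reduces to showing that the induced scalar on $S^{(d-6-j,\,j)}V^{\vee}$ is nonzero for each $j = 0, \ldots, k-1$. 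For $j = 0$ this follows from Euler's identity $\sum_i x_i \partial_i f = k f$: projecting $\phi(f \otimes g)$ to $S^{d-5}V^{\vee}$ via multiplication yields $k\,fg$. The remaining cases are handled by evaluating $\phi$ on explicit highest weight vectors of $S^{(d-6-j,\,j)}V^{\vee}$; this representation-theoretic nonvanishing check is the main technical obstacle.
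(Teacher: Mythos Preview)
Your reduction to the vanishing of $H^1\bigl(\mathbb{P}^2, S^k\Omega^1_{\mathbb{P}^2}(d-6)\bigr)$ via the long exact sequence of \eqref{projection1} is exactly the paper's first step. At that point the paper simply cites a general cohomology formula for $S^k\Omega^1_{\mathbb{P}^n}(m)$ (Etesse \cite{et}, or equivalently Bott--Borel--Weil for this homogeneous bundle), whereas you attempt a direct computation via the Euler resolution. Your two-term resolution of $S^k\Omega^1_{\mathbb{P}^2}$, the identification $H^1\cong\coker\phi$, and the multiplicity-free Pieri decompositions of source and target are all correct under the hypothesis $d-6\ge 2k$.

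The argument is, however, incomplete as written: you verify the Schur--lemma scalar is nonzero only for $j=0$ and leave the cases $j=1,\dots,k-1$ as an unperformed check, explicitly calling it ``the main technical obstacle''. Since those nonvanishings are precisely the content of the surjectivity of $\phi$, this is the heart of the matter, not a routine detail. It can be closed cleanly without case-by-case highest weight computations. Define the companion map $\phi^*(h\otimes u)=\sum_i x_i h\otimes \partial_i u$; a direct calculation gives
\[
\phi\circ\phi^* \;=\; (d-5-k)\,\mathrm{id} \;+\; \sum_{i,j} E_{ij}\otimes E_{ji},\qquad E_{ij}:=x_i\partial_j.
\]
The operator $\sum_{i,j} E_{ij}\otimes E_{ji}$ equals $\tfrac{1}{2}\bigl(\Delta(C)-C\otimes 1-1\otimes C\bigr)$ for the quadratic Casimir $C=\sum_{i,j}E_{ij}E_{ji}$ of $\mathfrak{gl}_3$, so its eigenvalue on each Pieri summand is determined by highest weights. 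One finds that $\phi\circ\phi^*$ acts on $S^{(d-6-j,\,j)}V^{\vee}$ by the scalar $(k-j)(d-5-k-j)$, which is strictly positive for $0\le j\le k-1$ whenever $d-6\ge 2k$. Hence $\phi\circ\phi^*$ is an automorphism of the target and $\phi$ is surjective, completing your approach.
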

\begin{proof}
    Notice that by \eqref{projection1}, it is enough to prove that $H^1(S^k\Omega^1_{\mathbb{P}^2}(d-6))=0$. From \cite[Theorem 1.3.1]{et}, if $k\leq \frac{d-6}{2}$, $H^i(S^k\Omega^1_{\mathbb{P}^2}(d-6))=0$ for $i>0$.
\end{proof}
Let us now focus on $p_2$.
\begin{proposition}
\label{1estimate}
    Assume that $k\leq \frac{d-4}{2}$. Then $$corank(p_2)=k(k+3)\frac{2k+3}{2},$$ hence 
    $$rank(p_2) = \frac{2k+3}{2}(d(d-3)-k(k+3)).$$
    So if $k\leq \frac{d-6}{2}$, we have 
    $$rank(\gamma_C^k)  \geq rank(p_2) = \frac{2k+3}{2}(d(d-3)-k(k+3)).$$ 
    
    \end{proposition}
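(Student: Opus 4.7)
The plan is to realize $\operatorname{corank}(p_2)$ as an explicit cohomology group on $\mathbb{P}^2$, via the long exact sequence attached to \eqref{p2}, and then to evaluate it using the Ein--Lazarsfeld vanishing, Serre duality on $\mathbb{P}^2$, and the dual Euler sequence. First, I would tensor \eqref{p2} by $K_C^{\otimes 2}$ and use $K_C=\mathcal{O}_C(d-3)$, $\mathcal{O}_C(-C)=\mathcal{O}_C(-d)$, arriving at the short exact sequence
\[
0 \to S^{k-1}\Omega^1_{\mathbb{P}^2_{\mid C}}(d-6) \to S^k\Omega^1_{\mathbb{P}^2_{\mid C}}(2d-6) \to K_C^{\otimes(k+2)} \to 0,
\]
whose $H^0$-map is exactly $p_2$. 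Since $h^1(K_C^{\otimes(k+2)})=0$, the associated long exact sequence yields
\[
\operatorname{corank}(p_2) \;=\; h^1\!\bigl(S^{k-1}\Omega^1_{\mathbb{P}^2_{\mid C}}(d-6)\bigr) \;-\; h^1\!\bigl(S^k\Omega^1_{\mathbb{P}^2_{\mid C}}(2d-6)\bigr).
\]

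I would then control each summand through the restriction sequence $0 \to S^j\Omega^1_{\mathbb{P}^2}(a-d) \to S^j\Omega^1_{\mathbb{P}^2}(a) \to S^j\Omega^1_{\mathbb{P}^2_{\mid C}}(a) \to 0$ combined with \cite[Theorem 1.3.1]{et}. The hypothesis $k\le(d-4)/2$ is precisely $k-1\le(d-6)/2$, so $H^i(S^{k-1}\Omega^1_{\mathbb{P}^2}(d-6))=0$ for $i>0$, which collapses the first summand to $h^1(S^{k-1}\Omega^1_{\mathbb{P}^2_{\mid C}}(d-6)) = h^2(S^{k-1}\Omega^1_{\mathbb{P}^2}(-6))$. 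For the second summand, $H^1(S^k\Omega^1_{\mathbb{P}^2}(2d-6))=0$ by the same theorem, whereas $H^2(S^k\Omega^1_{\mathbb{P}^2}(d-6))$ is Serre-dual to $H^0(S^kT_{\mathbb{P}^2}(3-d))$ and vanishes as soon as $k\le d-4$; this is a direct check via the symmetric power of the dual Euler sequence $0\to\mathcal{O}\to\mathcal{O}(1)^{\oplus 3}\to T_{\mathbb{P}^2}\to 0$, because both line-bundle terms in the resulting resolution have no global sections. Hence $h^1(S^k\Omega^1_{\mathbb{P}^2_{\mid C}}(2d-6))=0$ and $\operatorname{corank}(p_2)=h^2(S^{k-1}\Omega^1_{\mathbb{P}^2}(-6))$.

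To evaluate this last group, Serre duality on $\mathbb{P}^2$ gives $h^2(S^{k-1}\Omega^1_{\mathbb{P}^2}(-6))=h^0(S^{k-1}T_{\mathbb{P}^2}(3))$. Taking the $(k-1)$st symmetric power of the dual Euler sequence (valid because $\mathcal{O}$ is a line subbundle of $\mathcal{O}(1)^{\oplus 3}$) and twisting by $\mathcal{O}(3)$ produces
\[
0 \to S^{k-2}(\mathbb{C}^3)\otimes\mathcal{O}_{\mathbb{P}^2}(k+1) \to S^{k-1}(\mathbb{C}^3)\otimes\mathcal{O}_{\mathbb{P}^2}(k+2) \to S^{k-1}T_{\mathbb{P}^2}(3) \to 0.
\]
Since $H^1$ of any line bundle on $\mathbb{P}^2$ vanishes, this yields
\[
h^0(S^{k-1}T_{\mathbb{P}^2}(3)) \;=\; \binom{k+1}{2}\binom{k+4}{2}-\binom{k}{2}\binom{k+3}{2} \;=\; \frac{k(k+3)(2k+3)}{2},
\]
after a short algebraic simplification. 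The rank statement follows at once from $h^0(K_C^{\otimes(k+2)})=(2k+3)(g-1)=(2k+3)d(d-3)/2$.

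Finally, for the lower bound $\operatorname{rank}(\gamma_C^k)\ge\operatorname{rank}(p_2)$ under $k\le(d-6)/2$, I would chase the commutative diagram \eqref{general}: the surjectivity of $\gamma_L^k$ (by the preceding Lemma) and of $p_1$ (Proposition \ref{surjp1}) together force $\operatorname{Im}(\gamma_C^k)\supseteq\operatorname{Im}(p_2\circ p_1\circ\gamma_L^k)=\operatorname{Im}(p_2)$. The step I expect to be most delicate is securing $H^2(S^k\Omega^1_{\mathbb{P}^2}(d-6))=0$ at the boundary $k=(d-4)/2$, where Ein--Lazarsfeld does not directly apply; the Serre-dual reformulation and the explicit Euler-sequence computation sketched above supply the required vanishing.
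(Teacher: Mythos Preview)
Your proposal is correct and follows essentially the same route as the paper: the same twisted conormal sequence, the same restriction sequences to reduce the two $H^1$'s on $C$ to cohomology on $\mathbb{P}^2$, the same invocation of \cite[Theorem 1.3.1]{et} for the key vanishings, and the same dimension count for $h^2(S^{k-1}\Omega^1_{\mathbb{P}^2}(-6))$ (the paper quotes Etesse's formula as $\dim(S^{k-1}\mathbb{C}^3\otimes S^{k+2}\mathbb{C}^3)-\dim(S^{k-2}\mathbb{C}^3\otimes S^{k+1}\mathbb{C}^3)$, which is exactly your binomial expression via Serre duality and the Euler sequence). Your extra care in justifying $H^2(S^k\Omega^1_{\mathbb{P}^2}(d-6))=0$ at the boundary $k=(d-4)/2$ is well placed---the paper's ``we already know'' is somewhat brisk there---and the Serre-dual/Euler-sequence argument you give (valid for $k\le d-4$, hence certainly for $k\le(d-4)/2$) is correct; one small slip is the attribution ``Ein--Lazarsfeld vanishing,'' where you mean Etesse's \cite[Theorem 1.3.1]{et}, which you cite correctly elsewhere.
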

\begin{proof}
We need to study  exact sequence  \eqref{p2} tensored by $K_C^{\otimes 2}$, namely the  following  
\begin{equation}
    \label{projection2}
    \begin{tikzcd}
        0 \arrow[r] & S^{k-1}\Omega^1_{\mathbb{P}^2_{\mid C}}(d-6) \arrow[r] & S^k \Omega^1_{\mathbb{P}^2_{\mid C}}(2d-6) \arrow[r] & K_C^{\otimes (k+2)} \arrow[r] & 0.
    \end{tikzcd}
\end{equation}
First we  compute $H^1(S^{k-1}\Omega^1_{\mathbb{P}^2_{\mid C}}(d-6)).$\\
In order to do that, we consider the exact sequence 
\begin{equation}
\label{sym}
\begin{tikzcd}
    0 \arrow[r] & S^{k-1}\Omega^1_{\mathbb{P}^2}(-6) \arrow[r] & S^{k-1} \Omega^1_{\mathbb{P}^2}(d-6) \arrow[r] & S^{k-1} \Omega^1_{\mathbb{P}^2_{\mid_ C }}(d-6) \arrow[r] & 0
\end{tikzcd}
\end{equation}
obtained by tensoring with $S^{k-1}\Omega^1_{\mathbb{P}^2}(d-6)$ the restriction sequence 
$$
\begin{tikzcd}
    0 \arrow[r] & \mathcal{O}_{\mathbb{P}^2}(-C) \arrow[r] & \mathcal{O}_{\mathbb{P}^2} \arrow[r] & \mathcal{O}_C \arrow[r] & 0.
\end{tikzcd}
$$
Again, using  \cite[Theorem 1.3.1]{et}, we see that if $k-1\leq \frac{d-6}{2}$, or equivalently $k\leq \frac{d-4}{2}$, we have $H^i(S^{k-1}\Omega^1_{\mathbb{P}^2}(d-6))=0$ for $i>0$. \\ Hence by \eqref{sym} we get $H^1(S^{k-1}\Omega^1_{\mathbb{P}^2_{\mid C}}(d-6))\cong H^2(S^{k-1}\Omega^1_{\mathbb{P}^2}(-6))$. Again by  \cite[Theorem 1.3.1]{et}, we see that $$\dim (H^2(S^{k-1}\Omega^1_{\mathbb{P}^2}(-6)))=\dim(S^{k-1}\mathbb{C}^3\otimes S^{k+2}\mathbb{C}^3)-
\dim(S^{k-2}\mathbb{C}^3\otimes  S^{k+1}\mathbb{C}^3)=
k(k+3)\frac{2k+3}{2}.$$
Now we claim that $H^1(S^k\Omega^1_{\mathbb{P}^2_{\mid C}}(2d-6))=0.$ Indeed, let us consider the following exact sequence 
$$
\begin{tikzcd}
    0 \arrow[r] & S^k\Omega^1_{\mathbb{P}^2}(d-6) \arrow[r] & S^k \Omega^1_{\mathbb{P}^2}(2d-6) \arrow[r] & S^k\Omega^1_{\mathbb{P}^2_{\mid_ C }}(2d-6) \arrow[r] & 0.
\end{tikzcd}
$$
Since we already know that $H^2(S^k\Omega^1_{\mathbb{P}^2}(d-6))=0$, we only have to show that $H^1(S^k\Omega^1_{\mathbb{P}^2}(2d-6))=0$. Indeed, from \cite[Theorem 1.3.1]{et},  this is true if $2d-6-k\geq k$, which is equivalent to $k\leq d-3$. Since we have $k\leq \frac{d-4}{2}\leq d-3$, we are done.\\
Finally, putting all these things together, from \eqref{projection2} we get 
$$
\begin{tikzcd}
     H^0(S^k \Omega^1_{\mathbb{P}^2_{\mid C}}(2d-6))\arrow["p_2",r] & H^0(K_C^{\otimes k+2})\arrow[r] & H^1(S^{k-1}\Omega^1_{\mathbb{P}^2_{\mid C}}(d-6))\cong \mathbb{C}^{k(k+3)\frac{2k+3}{2}} \arrow[r] & 0
\end{tikzcd}
$$
which gives precisely $corank(p_2)=k(k+3)\frac{2k+3}{2}$.
The last statement follows from the commutativity of  diagram \eqref{general} and by the surjectivity of $\gamma^k_L$ and $p_1$. 

\end{proof}


We will show that for $k \leq \frac{d-4}{2}$, the image of $\gamma^k_C$ is contained in the image of $p_2$, hence by Proposition \ref{1estimate} we conclude that if $k\leq \frac{d-6}{2}$ the rank of $\gamma^k_C$ is equal to $rank(p_2) = \frac{2k+3}{2}(d(d-3)-k(k+3)).$
We have the following 

\begin{lemma}
\label{lemmatecnico}
Let $C$ be a smooth plane curve of degree $d \geq 5$. Then, $\forall  \ 0 \leq k \leq d-3$ we have 
\begin{enumerate}
\item $R^1p_* (I^k_{\Delta_C} \otimes q^*K_C) \cong {\mathcal O}_C$, 
\item $R^1p_* (I^k_{\Delta_{{\mathbb P}^2}} \otimes q^*(K_C)) \cong {\mathcal O}_{{\mathbb P}^2}$, 
\item $R^1p_* (I^k_{\Delta_{{\mathbb P}^2}} \otimes q^*({\mathcal O}_{{\mathbb P}^2}(d-3))) =0$.\end{enumerate}
\end{lemma}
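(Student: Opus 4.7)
The strategy is the same for all three statements: compute the stalks of $R^1p_*$ via cohomology and base change, and in cases (1)--(2) identify the resulting line bundle by bootstrapping from the case $k=0$. The key observation is that the fiber of the first projection $p$ over a point $y\in Y$ is $\{y\}\times Y\cong Y$, and the restriction of $I^k_{\Delta_Y}$ to this fiber is $I^k_y$, the $k$-th power of the ideal of $y$ in $Y$. Whenever base change applies, the stalk of $R^1p_*(I^k_{\Delta_Y}\otimes q^*\mathcal{F})$ at $y$ therefore computes $H^1(Y,I^k_y\otimes\mathcal{F})$.

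For (3), take $Y=\mathbb{P}^2$ and $\mathcal{F}=\mathcal{O}(d-3)$. From the exact sequence $0\to I^k_y\otimes\mathcal{O}(d-3)\to \mathcal{O}(d-3)\to \mathcal{O}_{y^{(k-1)}}(d-3)\to 0$, where $y^{(k-1)}$ is the $(k-1)$-st infinitesimal neighborhood of $y$, and the vanishing $H^1(\mathcal{O}(d-3))=0$, the vanishing of $H^1(\mathbb{P}^2,I^k_y(d-3))$ is equivalent to the surjectivity of the $(k-1)$-jet evaluation of $\mathcal{O}(d-3)$ at $y$. This holds on $\mathbb{P}^2$ as soon as $d-3\geq k-1$, which is implied by the assumption $k\leq d-3$. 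Fiberwise vanishing then gives $R^1p_*=0$.

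For (1) and (2) the relevant fiber is $H^1(C,I^k_y\otimes K_C)$. For $y\in C$ we have $I^k_y\otimes\mathcal{O}_C=\mathcal{O}_C(-ky)$, so Serre duality yields $h^1(K_C(-ky))=h^0(\mathcal{O}_C(ky))$. Since a smooth plane curve of degree $d\geq 5$ has gonality $d-1$, no effective divisor of degree $\leq d-2$ moves in a pencil, hence $h^0(\mathcal{O}_C(ky))=1$ for every $y\in C$ and every $k\leq d-3$. For (2), the fibers over $y\notin C$ collapse to $H^1(K_C)=\mathbb{C}$, also of dimension $1$. Thus $h^1$ is constantly $1$, and Grauert's theorem implies that $R^1p_*(I^k_{\Delta_Y}\otimes q^*K_C)$ is a line bundle. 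To identify it with $\mathcal{O}_Y$, tensor $0\to I^{k+1}_{\Delta_Y}\to I^k_{\Delta_Y}\to S^k\Omega^1_Y\to 0$ with $q^*K_C$ and apply $p_*$: since $p|_{\Delta_Y}$ is an isomorphism, $R^{>0}p_*$ of the quotient vanishes, yielding a surjection $R^1p_*(I^{k+1}_{\Delta_Y}\otimes q^*K_C)\twoheadrightarrow R^1p_*(I^k_{\Delta_Y}\otimes q^*K_C)$. Between line bundles this is an isomorphism, so all $R^1p_*$ coincide with the $k=0$ case $R^1p_*(q^*K_C)=H^1(C,K_C)\otimes\mathcal{O}_Y=\mathcal{O}_Y$.

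The main point requiring care is base change in case (2), since $K_C$ is not a line bundle on $\mathbb{P}^2$. My plan is to factor through $j:\mathbb{P}^2\times C\hookrightarrow \mathbb{P}^2\times \mathbb{P}^2$: the section of $q^*\mathcal{O}(d)$ cutting out $\mathbb{P}^2\times C$ is a nonzerodivisor on the torsion-free sheaf $I^k_{\Delta_{\mathbb{P}^2}}$, so $\mathrm{Tor}_1(\mathcal{O}_{\mathbb{P}^2\times C},I^k_{\Delta_{\mathbb{P}^2}})=0$ and the projection formula gives $I^k_{\Delta_{\mathbb{P}^2}}\otimes q^*K_C=j_*(I^k_{\Delta_C/\mathbb{P}^2\times C}\otimes q_C^*K_C)$. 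A local computation shows that, in every fiber of the smooth projection $p_C:\mathbb{P}^2\times C\to\mathbb{P}^2$, the sheaf $I^k_{\Delta_C/\mathbb{P}^2\times C}$ restricts to $\mathcal{O}_C(-ky)$ if $y\in C$ and to $\mathcal{O}_C$ if $y\notin C$; in particular it is flat over $\mathbb{P}^2$, so the standard base change theorem applies to $p_C$.
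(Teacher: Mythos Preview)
Your approach matches the paper's almost exactly: fiberwise constancy of $h^1$, Grauert, then the inductive identification with $\mathcal{O}_Y$ via the maps coming from $I^{k+1}_{\Delta}\to I^k_{\Delta}$. Parts (1) and (3) are fine, and your observation that a surjection of line bundles is automatically an isomorphism is a slightly cleaner way to close the induction than the paper's fiberwise check.

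There is, however, a genuine slip in your treatment of (2). The claim that $I^k_{\Delta_C/\mathbb{P}^2\times C}$ restricts on the fiber $\{y\}\times C$ (for $y\in C$) to $\mathcal{O}_C(-ky)$ is false: $\Delta_C$ has codimension $2$ in the threefold $\mathbb{P}^2\times C$, so the curve $\{y\}\times C$ meets $\Delta_C$ in excess dimension, $\mathrm{Tor}_1^{\mathcal{O}_{\mathbb{P}^2\times C}}(\mathcal{O}_{\Delta_C},\mathcal{O}_{\{y\}\times C})\neq 0$, and the honest pullback of $I^k_{\Delta_C/\mathbb{P}^2\times C}$ acquires torsion at $y$. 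In fact, if the fibers really were $\mathcal{O}_C(-ky)$ for $y\in C$ and $\mathcal{O}_C$ for $y\notin C$, their Euler characteristics would differ by $k$ and the sheaf could \emph{not} be $p_C$-flat; so your ``local computation'' would actually contradict the flatness you deduce from it.

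The fix is painless and is easiest to carry out on $\mathbb{P}^2\times\mathbb{P}^2$. The ideal $I^k_{\Delta_{\mathbb{P}^2}}$ is $p$-flat (its fibers $I^k_y\subset\mathcal{O}_{\mathbb{P}^2}$ have constant Hilbert polynomial). Tensoring $0\to q^*\mathcal{O}_{\mathbb{P}^2}(-3)\to q^*L\to q^*K_C\to 0$ by $I^k_{\Delta_{\mathbb{P}^2}}$ stays exact (torsion-freeness), the two left terms are $p$-flat, and the left map remains injective on every fiber $\{y\}\times\mathbb{P}^2$ (again because $I^k_y$ is torsion-free); hence the cokernel $I^k_{\Delta_{\mathbb{P}^2}}\otimes q^*K_C$ is $p$-flat and Grauert applies. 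The fiber over $y\in C$ is $I^k_y\otimes_{\mathcal{O}_{\mathbb{P}^2}}K_C$, which is $K_C(-ky)$ plus a torsion sheaf supported at $y$; the torsion contributes nothing to $H^1$, so $h^1\equiv 1$ as required. (The paper also invokes Grauert at this step without spelling out flatness, so this is a point where your write-up can in fact be more careful than the original.)
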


\begin{proof}

To prove (1), first notice that $\forall x \in C$, we have 
$$H^1(p^{-1}(x), I^k_{\Delta_C} \otimes q^*K_C) \cong H^1(C, K_C(-kx)) \cong H^1(C, K_C) \cong {\mathbb C},$$ for all $k \leq gon(C) -1 = d-2$. 
Then by Grauert's theorem (see e.g. \cite{Ha}, p. 288)  $R^1p_* (I^k_{\Delta_C} \otimes q^*K_C) $ is locally free and its fibre at $x$ is $H^1(C, K_C(-kx))$. For $k= 0$ we have $R^1p_* (q^*K_C) \cong H^1(C, K_C) \otimes {\mathcal O}_C  \cong  {\mathcal O}_C$. 
By induction on $k$, for any $ 0 \leq k \leq gon(C) -1$, the exact sequence \eqref{R} gives a natural map of line bundles
$$f_k: R^1p_* (I^{k+1}_{\Delta_C} \otimes q^*K_C) \rightarrow R^1p_* (I^k_{\Delta_C} \otimes q^*K_C),$$
 that on the fibres at any point $x \in C$ gives the natural isomorphism
$$H^1(C, K_C(-(k+1)x)) \rightarrow H^1(C, K_C(-kx)).$$
Hence the map $f_k$ is an isomorphism, for all $0 \leq k+1 \leq gon(C) -1 = d-2$, so for any $k \leq d-3$ the line bundles $R^1p_* (I^k_{\Delta_C} \otimes q^*K_C)$ are all isomorphic to $R^1p_* (q^*K_C) \cong H^1(C, K_C) \otimes {\mathcal O}_C  \cong  {\mathcal O}_C$.

To prove (2), we argue in the same way, noticing that for all $x \in {\mathbb P}^2$, for all $k \leq gon(C) -1 = d-2$, 
we have 

$$H^1(p^{-1}(x), I^k_{\Delta_{{\mathbb P}^2}} \otimes q^*(K_C)) \cong H^1(C, K_C(-kx)) \cong H^1(C, K_C) \cong {\mathbb C}, \ if \ x \in C,$$ 
$$H^1(p^{-1}(x), I^k_{\Delta_{{\mathbb P}^2}} \otimes q^*(K_C)) \cong H^1(C, K_C)  \cong {\mathbb C}, \ if \ x \not \in C.$$ 
So again by Grauert's Theorem  we know that $R^1p_* (I^k_{\Delta_{{\mathbb P}^2}} \otimes q^*(K_C)) $ is a line bundle and we conclude as in case (1). 

To prove (3), we show that $H^1(p^{-1}(x), I^{k+1}_{\Delta_{{\mathbb P}^2}} \otimes q^*({\mathcal O}_{{\mathbb P}^2}(d-3))) =0$, for all $x \in {\mathbb P}^2$ and for all $k \leq d-3$. We have 
$$H^1(p^{-1}(x), I^{k+1}_{\Delta_{{\mathbb P}^2}} \otimes q^*({\mathcal O}_{{\mathbb P}^2}(d-3))) \cong H^1({\mathbb P}^2, {\mathcal O}_{{\mathbb P}^2}(d-3) \otimes {\mathcal I}^{k+1}_x),$$
for all $x \in {\mathbb P}^2$, where ${\mathcal I}_x$ denotes the ideal sheaf of the point. 

Consider the exact sequence 
$$ 0 \rightarrow {\mathcal I}^{k+1}_x \rightarrow {\mathcal O}_{{\mathbb P}^2} \rightarrow {\mathcal O}_{{\mathbb P}^2} /{\mathcal I}^{k+1}_x \rightarrow 0,$$
tensor it by $L= {\mathcal O}_{{\mathbb P}^2}(d-3)$ and take cohomology. We have the exact sequence 
$$0 \rightarrow H^0({\mathbb P}^2, {\mathcal I}^{k+1}_x \otimes L)  \rightarrow H^0({\mathbb P}^2, L)  \rightarrow H^0({\mathbb P}^2, {\mathcal O}_{{\mathbb P}^2}/{\mathcal I}^{k+1}_x \otimes L) \rightarrow H^1({\mathbb P}^2, {\mathcal I}^{k+1}_x \otimes L))  \rightarrow 0$$
since $H^1({\mathbb P}^2, L) =0$. Then if $L$ is $k$-very ample, the restriction map  $H^0({\mathbb P}^2, L)  \rightarrow H^0({\mathbb P}^2, {\mathcal O}_{{\mathbb P}^2}/{\mathcal I}^{k+1}_x \otimes L)$ is surjective, hence $H^1({\mathbb P}^2, {\mathcal I}^{k+1}_x \otimes L)) =0$.
By a result of Beltrametti and Sommese (\cite{bs}) we know that $L$ is $k$-very ample if $k \leq d-3$. This concludes the proof. 
\end{proof}

Applying the results of Lemma \ref{lemmatecnico} to the exact sequence \eqref{R}, we immediately get the following exact sequences: 

\begin{equation}
\label{RC}
0 \rightarrow R^{k+1}_C \rightarrow R_C^k \rightarrow K_C^{\otimes{(k+1)}} \rightarrow 0, \ \forall k \leq d-3,
\end{equation}

\begin{equation}
\label{G1}
0 \rightarrow G^{k+1} \rightarrow G^k \rightarrow S^{k} \Omega_{{\mathbb P}^2}^1 \otimes K_C \rightarrow 0, \ \forall k \leq d-3,
\end{equation}

\begin{equation}
\label{RX}
0 \rightarrow R^{k+1}_{{\mathbb P}^2} \rightarrow R^{k}_{{\mathbb P}^2} \rightarrow S^{k} \Omega_{{\mathbb P}^2}^1 \otimes  {\mathcal O}_{{\mathbb P}^2}(d-3) \rightarrow 0, \ \forall k \leq d-3.
\end{equation}
Take $k \leq d-3$, then taking restriction to $C$ of the exact sequence \eqref{G1}, we have the following commutative diagram: 

\begin{equation}
\label{diagrammone}
 \xymatrix{
& & &  & 0\ar[d] & \\
& & 0 \ar[d]&0 \ar[d] & S^{k} \Omega_{{\mathbb P}^2}^1(-C) \otimes K_C  \ar[d]& \\
&0\ar[r] & G^{k+1}(-C)\ar[d]\ar[r] &  G^{k}(-C)\ar[d]\ar[r] &  S^{k} \Omega_{{\mathbb P}^2}^1(-C) \otimes K_C\ar[r]\ar[d] & 0\\
&0 \ar[r] &G^{k+1} \ar[r]\ar[d]& G^k \ar[r]\ar[d]& S^{k} \Omega_{{\mathbb P}^2}^1 \otimes K_C\ar[r] \ar[d]& 0\\
0\ar[r]& S^{k} \Omega_{{\mathbb P}^2}^1(-C) \otimes K_C \ar[r]& G^{k+1}_{|C}\ar[r]^{g^{k+1}}\ar[d]&\ar[r] G^{k}_{|C}\ar[r]^{r^k}\ar[d] &S^{k} \Omega_{{\mathbb P}^2}^1 \otimes K_C\ar[r] \ar[d]& 0\\
  &    & 0 &0 &0}
\end{equation}

\begin{proposition}
The image of the map $g^{k+1}$ in diagram \eqref{diagrammone} is $R_C^{k+1}$, for all $k \leq d-3$. So we have the following  exact sequences: 
\begin{equation}
\label{pallino}
0 \rightarrow  S^{k} {\Omega_{{\mathbb P}^2}^1}_{|C}(-3)  \rightarrow G^{k+1}_{|C} \rightarrow R^{k+1}_C \rightarrow 0 
\end{equation}

\begin{equation}
\label{*}
0 \rightarrow   R^{k+1}_C  \rightarrow G^{k}_{|C} \rightarrow S^{k} \Omega_{{\mathbb P}^2}^1 \otimes K_C\rightarrow 0 
\end{equation}

\end{proposition}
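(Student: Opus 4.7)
The strategy is first to record the four-term exact sequence comprising the bottom row of diagram \eqref{diagrammone} (the output of a snake-lemma argument applied to the two middle rows), and then to identify the image of $g^{k+1}$ with $R^{k+1}_C$ by constructing a natural surjection $\phi: G^{k+1}_{|C} \twoheadrightarrow R^{k+1}_C$ whose kernel coincides with $\ker(g^{k+1})$. Together, this will produce both \eqref{pallino} and \eqref{*}.

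First I would observe that the snake lemma applied to the middle two rows of \eqref{diagrammone} --- namely the short exact sequence \eqref{G1} and its twist by $\mathcal{O}(-C)$, linked vertically by multiplication by the defining section $s$ of $C$ --- yields the four-term exact sequence
\[
0 \to S^k\Omega^1_{\mathbb{P}^2}(-C) \otimes K_C \to G^{k+1}_{|C} \xrightarrow{g^{k+1}} G^k_{|C} \xrightarrow{r^k} S^k\Omega^1_{\mathbb{P}^2} \otimes K_C \to 0.
\]
The leftmost term appears because the multiplication-by-$s$ map on $S^k\Omega^1_{\mathbb{P}^2}(-C) \otimes K_C$ is the zero map: the target sheaf is supported on $C$, and $s$ vanishes along $C$. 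In particular $\ker(g^{k+1}) = S^k\Omega^1_{\mathbb{P}^2}(-C) \otimes K_C$, which as a sheaf on $C$ equals $S^k\Omega^1_{\mathbb{P}^2}|_C \otimes \mathcal{O}_C(-C) \otimes \mathcal{O}_C(d-3) = S^k\Omega^1_{\mathbb{P}^2}|_C(-3)$.

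Next I would construct $\phi$. On $\mathbb{P}^2 \times \mathbb{P}^2$ there is a canonical morphism $I^{k+1}_{\Delta_{\mathbb{P}^2}} \otimes q^*K_C \to j_*\bigl(I^{k+1}_{\Delta_C} \otimes q^*K_C\bigr)$, where $j : C \times C \hookrightarrow \mathbb{P}^2 \times \mathbb{P}^2$; this reflects the fact that $q^*K_C$ is supported on $\mathbb{P}^2 \times C$, so only the behavior of $\Delta_{\mathbb{P}^2}$ along $\Delta_C$ matters, and $I_{\Delta_{\mathbb{P}^2}}$ restricted to $C \times C$ surjects onto $I_{\Delta_C}$. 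Pushing forward by $p$ yields $G^{k+1} \to j_*R^{k+1}_C$ on $\mathbb{P}^2$, and restricting to $C$ gives $\phi$. Surjectivity is checked fiberwise via Grauert's theorem (applicable by Lemma \ref{lemmatecnico}): at $x \in C$ the map becomes the restriction $H^0(\mathbb{P}^2, \mathcal{I}^{k+1}_{x, \mathbb{P}^2} \otimes K_C) \to H^0(C, K_C(-(k+1)x))$, which is surjective because $\mathcal{O}_{\mathbb{P}^2}(d-3)$ is $(k+1)$-very ample for $k+1 \leq d-3$ by \cite{bs}, exactly the argument used in Lemma \ref{lemmatecnico}(3). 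For the kernel, a local computation at $x$ with $C = \{v = 0\}$ identifies the fiber of $\ker(\phi)$ with $\mathrm{Tor}_1^{\mathcal{O}_{\mathbb{P}^2}}(\mathcal{O}_{\mathbb{P}^2}/\mathcal{I}^{k+1}_{x,\mathbb{P}^2}, K_C)$, a skyscraper of dimension $k+1$ arising as the kernel of multiplication by $v$ on $\mathcal{O}_{\mathbb{P}^2,x}/\mathcal{I}^{k+1}_{x,\mathbb{P}^2}$; globalizing via the symmetric powers of the conormal sequence $0 \to \mathcal{O}_C(-C) \to \Omega^1_{\mathbb{P}^2}|_C \to K_C \to 0$ yields $\ker(\phi) = S^k\Omega^1_{\mathbb{P}^2}|_C \otimes \mathcal{O}_C(-C) \otimes K_C = S^k\Omega^1_{\mathbb{P}^2}|_C(-3)$. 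This produces the exact sequence \eqref{pallino}.

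Finally, since $\ker(\phi) = \ker(g^{k+1})$ inside $G^{k+1}_{|C}$, both maps factor through the same quotient, giving a natural identification $\mathrm{Im}(g^{k+1}) \cong R^{k+1}_C$; a short diagram chase using the naturality of $\phi$ shows that this identification is compatible with the inclusion into $G^k_{|C}$ dictated by the snake lemma. Splitting the four-term sequence at this identification produces \eqref{*}. The main obstacle is the kernel computation for $\phi$: one must combine the local $\mathrm{Tor}_1$ calculation with the symmetric powers of the conormal sequence to globalize it correctly. The remaining steps are essentially formal consequences of the snake lemma and the naturality of the restriction map.
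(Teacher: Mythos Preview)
Your approach is genuinely different from the paper's, and the overall strategy is sound, but two steps are under-justified and, once repaired, essentially reproduce the paper's inductive structure.

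The paper proceeds by induction on $k$. The base case $k=0$ restricts the sequence $0 \to G^1 \to H^0(K_C)\otimes\mathcal{O}_{\mathbb{P}^2} \to K_C \to 0$ to $C$; the only Tor computation needed is $\mathrm{tor}_1^{\mathcal{O}_{\mathbb{P}^2}}(K_C,\mathcal{O}_C)\cong\mathcal{O}_C(-3)$, which is immediate. For the inductive step the paper assumes \eqref{pallino} at level $k$ and places it as the middle column of a $3\times 3$ diagram whose right column is the $k$-th symmetric power of the conormal sequence (tensored by $K_C$) and whose bottom row is \eqref{RC}. A snake-lemma argument on that diagram identifies $\ker(r^k)$ with $R^{k+1}_C$ directly.

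Your direct route attempts to compute $\ker(\phi)$ in one shot. The fibrewise Tor calculation gives the correct rank $k+1$, but passing from this to the global identification $\ker(\phi)\cong S^k\Omega^1_{\mathbb{P}^2}|_C(-3)$ is not a formality: knowing the rank of a bundle on $C$ does not determine the bundle, and the phrase ``globalizing via the symmetric powers of the conormal sequence'' hides exactly the inductive filtration the paper exploits. To identify this Tor sheaf you would end up filtering $\mathcal{O}/\mathcal{I}^{k+1}$ by powers of $\mathcal{I}$, which amounts to the same induction.

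The second issue is the claim $\ker(\phi)=\ker(g^{k+1})$ \emph{as subsheaves} of $G^{k+1}_{|C}$. You have only shown the two kernels are abstractly isomorphic. The fix is to use the naturality square
\[
\begin{tikzcd}
G^{k+1}_{|C} \arrow[r,"g^{k+1}"] \arrow[d,"\phi_{k+1}"'] & G^{k}_{|C} \arrow[d,"\phi_k"] \\
R^{k+1}_C \arrow[r,hook] & R^{k}_C
\end{tikzcd}
\]
(which your construction does yield) together with injectivity of the bottom arrow to get $\ker(g^{k+1})\subseteq\ker(\phi_{k+1})$; then a rank-plus-saturation argument on the curve $C$ forces equality. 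This works, but you should spell it out rather than call it a ``short diagram chase''.

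In short: your plan can be completed, but the paper's induction is cleaner because the only non-formal input is the single Tor computation at $k=0$, whereas your direct approach defers the difficulty to the global identification of $\ker(\phi)$, which ultimately requires the same inductive unwinding.
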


\begin{proof}
We have to prove that the image of $g^{k+1}$ is $R_C^{k+1}$. The rest immediately follows splitting the bottom exact sequence of diagram \eqref{diagrammone} in two short exact sequences. 

We do it by induction on $k$. If $k =0$, restricting to $C$ the exact sequence  
$$0 \rightarrow G^1 \rightarrow H^0(C, K_C) \otimes {\mathcal O}_{{\mathbb P}^2} \rightarrow K_C \rightarrow 0,$$ 
we get 
$$0 \rightarrow tor_1^{{\mathcal O}_{{\mathbb P}^2}}(K_C, {\mathcal O}_C) \cong {\mathcal O}_C(-3) \rightarrow G^1_{|C} \stackrel{g^1}\rightarrow H^0(C, K_C) \otimes  {\mathcal O}_C \stackrel{ev}\rightarrow K_C \rightarrow 0. $$

In fact, if we tensor by ${\mathcal O}_C$ the exact sequence 
$$0 \rightarrow L(-C)  \rightarrow L  \rightarrow L_{|C} \cong K_C  \rightarrow 0$$
we get 
$$0 \rightarrow tor_1^{{\mathcal O}_{{\mathbb P}^2}}(K_C, {\mathcal O}_C) \rightarrow L_{|C}(-C) \rightarrow L_{|C} \rightarrow K_C \rightarrow 0,$$
so we obtain $$tor_1^{{\mathcal O}_{{\mathbb P}^2}}(K_C, {\mathcal O}_C) \cong  L_{|C}(-C) \cong {\mathcal O}_C(-3).$$
Now, since $ker(ev) = R_C$ by definition, we get that the image of $g^1$ is $R_C$. 

By induction, assume that  we have the exact sequence 
$$0 \rightarrow  S^{k-1} {\Omega_{{\mathbb P}^2}^1}_{|C}(-3)  \rightarrow G^{k}_{|C} \rightarrow R^{k}_C \rightarrow 0. $$

Take  the cotangent exact sequence 
$$0 \rightarrow {\mathcal O}_C(-d) \rightarrow \Omega^1_{{{\mathbb P}^2}|C} \rightarrow K_C \rightarrow 0,$$
and its $k$-th symmetric power tensored by $K_C$: 
$$0 \rightarrow S^{k-1}\Omega^1_{{{\mathbb P}^2}|C}(-3) \rightarrow S^{k}\Omega^1_{{{\mathbb P}^2}|C} \otimes K_C \rightarrow K_C^{\otimes{(k+1)}}\rightarrow 0.$$

Then we have the following commutative diagram 

\begin{equation}
\label{7}
\xymatrix{
   &  & 0\ar[d] & 0\ar[d] \\
  &&S^{k-1}\Omega^1_{{{\mathbb P}^2}|C}(-3) \ar[r]^{\cong}\ar[d] & S^{k-1}\Omega^1_{{{\mathbb P}^2}|C}(-3)   \ar[d]& \\
   0 \ar[r]  & R^{k+1}_C \ar[d]^{\cong}\ar[r] &  G^k_{|C}\ar[d]\ar[r]^{r^k} &  S^{k}\Omega^1_{{{\mathbb P}^2}|C} \otimes K_C\ar[r]\ar[d] & 0\\
  0\ar[r]&  R^{k+1}_C \ar[r] & R^k_C\ar[r]\ar[d] & K_C^{\otimes{(k+1)}}\ar[r] \ar[d]&0\\
  & &0&0}
\end{equation}

So we see that the image of $g^{k+1}$ which is the kernel of $r^k$  is $R_C^{k+1}$. 
\end{proof}

Now, take diagram \eqref{7}, tensored by $K_C$ and take cohomology. We get the following diagram

\begin{equation}
\label{8}
\xymatrix{
   &  & 0\ar[d] & 0\ar[d] \\
  &&H^0(S^{k-1}\Omega^1_{{{\mathbb P}^2}|C}(d-6)) \ar[r]^{\cong}\ar[d] & H^0(S^{k-1}\Omega^1_{{{\mathbb P}^2}|C}(d-6))   \ar[d]& \\
   0 \ar[r]  & H^0(R^{k+1}_C \otimes K_C) \ar[d]^{\cong}\ar[r] &  H^0(G^k_{|C} \otimes K_C)\ar[d]\ar[r]^{H^0(r^k)} &  H^0(S^{k}\Omega^1_{{{\mathbb P}^2}|C} (2d-6))\ar[r]\ar[d]^{p_2}& \\
  0\ar[r]&  H^0(R^{k+1}_C \otimes K_C) \ar[r] & H^0(R^k_C \otimes K_C)\ar[r]^{\gamma^k_C}\ar[d]^{e^k_C} & H^0(K_C^{\otimes{(k+2)}})\ar[r] \ar[d]&\\
  & &H^1(S^{k-1}\Omega^1_{{{\mathbb P}^2}|C}(d-6)) \ar[r]^{\cong}&H^1(S^{k-1}\Omega^1_{{{\mathbb P}^2}|C}(d-6))}
\end{equation}

\begin{proposition}
\label{main}
For all $k \leq \frac{d-4}{2}$, the map $e^k_C$ is identically zero, hence the image of the $k$-th Gaussian map $\gamma^k_C$ is contained in the image of $p_2$. Thus, for all $k \leq \frac{d-4}{2}$ we have: 
$$corank(\gamma^k_C) \geq corank(p_2) = k(k+3)\frac{2k+3}{2}.$$

\end{proposition}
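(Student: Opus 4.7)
The plan is to prove Proposition~\ref{main} in three steps: identify $e^k_C$ with a computable composition, reduce the vanishing to an additive cohomology identity on $C$, and verify that identity via Bott vanishing on $\mathbb{P}^2$.

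First, I would observe that diagram~\eqref{7} yields a morphism of short exact sequences on $C$ whose top row is \eqref{pallino} tensored by $K_C$ and whose bottom row is \eqref{projection2}: the vertical maps are the identity on the shared kernel $S^{k-1}\Omega^1_{\mathbb{P}^2|C}(d-6)$, $r^k \otimes \mathrm{id}_{K_C}$ in the middle, and $\gamma^k_C$ on the right. Naturality of the connecting homomorphism then gives
\[
e^k_C \;=\; \partial \circ \gamma^k_C,
\]
where $\partial:H^0(K_C^{\otimes(k+2)})\to H^1(S^{k-1}\Omega^1_{\mathbb{P}^2|C}(d-6))$ is the coboundary of \eqref{projection2}; this is precisely the commutativity of the bottom square of diagram~\eqref{8}.

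Second, I would exploit the vanishing $H^1(S^k\Omega^1_{\mathbb{P}^2|C}(2d-6))=0$ in our range (already invoked in the proof of Proposition~\ref{1estimate}), which makes $\partial$ surjective with kernel $\mathrm{Im}(p_2)$. Consequently $e^k_C=0$ is equivalent to $\mathrm{Im}(\gamma^k_C)\subseteq \mathrm{Im}(p_2)$, and (via the long exact sequence of the top row) to the additive identity
\[
h^0(G^k_{|C}\otimes K_C) \;=\; h^0(S^{k-1}\Omega^1_{\mathbb{P}^2|C}(d-6)) + h^0(R^k_C\otimes K_C).
\]
Since the Euler characteristic is automatically additive across the top sequence, this further reduces to the corresponding $H^1$-equality $h^1(G^k_{|C}\otimes K_C) = h^1(S^{k-1}\Omega^1_{\mathbb{P}^2|C}(d-6)) + h^1(R^k_C\otimes K_C)$.

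Third, I would verify this $H^1$-equality via Serre duality on $C$. Since $G^k_{|C}$ and $R^k_C$ are locally free on $C$ (the latter by the sequence \eqref{RC}, the former by \eqref{pallino}), dualizing \eqref{pallino} yields
\[
0 \to (R^k_C)^\vee \to (G^k_{|C})^\vee \to S^{k-1}\Omega^1_{\mathbb{P}^2|C}(3k) \to 0,
\]
and the required $H^1$-equality becomes the $H^0$-surjectivity of the middle term onto the right. I would establish this using the Bott vanishings for $S^m\Omega^1_{\mathbb{P}^2}$ valid in the range $k\leq(d-4)/2$ together with the restriction sequences from $\mathbb{P}^2$ to $C$. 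The main obstacle lies precisely in this last combinatorial step, which requires careful bookkeeping across dualization and several restriction sequences; a more tempting alternative---directly lifting sections of $R^k_C\otimes K_C$ via $\mathbb{P}^2\times\mathbb{P}^2$---fails because $R^1p_*(I^k_{\Delta_{\mathbb{P}^2}} \otimes q^*\mathcal{O}_{\mathbb{P}^2}(-3)) \neq 0$ for $k\geq 1$, forcing us to route through the intermediate sheaf $G^k$ and to carry out the dimension count above.
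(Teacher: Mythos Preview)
Your Steps 1 and 2 are correct, but they amount only to passing from $e^k_C$ to its Serre dual: the surjectivity of $H^0((G^k_{|C})^\vee)\to H^0(S^{k-1}T_{\mathbb{P}^2|C}(3))$ that you isolate in Step 3 is equivalent to the vanishing of the coboundary $H^0(S^{k-1}T_{\mathbb{P}^2|C}(3))\to H^1((R^k_C)^\vee)$, and that coboundary \emph{is} $(e^k_C)^*$. So after two steps you have restated the problem in dual form; the entire content of the proposition still lies ahead.

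In Step 3 you propose to settle this by ``Bott vanishings for $S^m\Omega^1_{\mathbb{P}^2}$ together with restriction sequences'', but this is where the argument has a genuine gap. The sheaves $(G^k)^*$ are built from $q^*K_C$, not from a line bundle on $\mathbb{P}^2$, so Bott-type vanishing does not apply to them directly; and the obvious attempt---lifting via $0\to (G^{k-1})^*\to (G^k)^*\to S^{k-1}T_{\mathbb{P}^2|C}(3)\to 0$---fails because $H^1((G^{k-1})^*)$ is \emph{not} zero in the relevant range (already for $k=2$ it surjects onto $H^1(T_{\mathbb{P}^2|C}(3))\cong H^0(\mathcal{O}_C(d-6))^*\neq 0$). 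What the paper supplies, and what your outline is missing, is a further comparison: one introduces the vector bundles $\mathcal{H}^k:=\coker(R^k_{\mathbb{P}^2}\to G^k)$ on $\mathbb{P}^2$, which fit into extensions
\[
0\longrightarrow S^{k-1}\Omega^1_{\mathbb{P}^2}(-3)\longrightarrow \mathcal{H}^k\longrightarrow \mathcal{H}^{k-1}\longrightarrow 0,\qquad \mathcal{H}^1=\mathcal{O}_{\mathbb{P}^2}(-3).
\]
The dual coboundary $(e^k_C)^*$ then factors through $\alpha:H^0(S^{k-1}T_{\mathbb{P}^2}(3))\to H^1((\mathcal{H}^{k-1})^*)$, and an induction on $k$ using these extensions (together with the genuine Bott vanishing $H^1(S^{k-2}\Omega^1_{\mathbb{P}^2}(-6))=0$) yields $H^1((\mathcal{H}^{k-1})^*)=0$, hence $\alpha=0$. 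This passage through $R^k_{\mathbb{P}^2}$ and the cokernels $\mathcal{H}^k$ is the substantive idea; without it, the ``careful bookkeeping'' you allude to has no starting point.
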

\begin{proof}
Dualizing equation \eqref{G1}, we obtain 
\begin{equation}
0 \rightarrow (G^{k-1})^*  \rightarrow (G^{k})^*\rightarrow S^{k-1} T_{{{\mathbb P}^2}|C} (3)\rightarrow 0,
\end{equation}
since , $${\mathcal Ext}^1(S^{k-1}\Omega^1_{{{\mathbb P}^2}|C} (d-3), {\mathcal O}_{{\mathbb P}^2}) \cong S^{k-1} T_{{\mathbb P}^2}(3-d) \otimes   {\mathcal Ext}^1({\mathcal O}_C, {\mathcal O}_{{\mathbb P}^2})\cong  $$
$$  \cong S^{k-1} T_{{\mathbb P}^2}(3-d) \otimes {\mathcal O}_C(C) \cong S^{k-1} T_{{{\mathbb P}^2}|C} (3).$$

So we get the following commutative digram
\begin{equation}
\label{9}
\xymatrix{
    & 0\ar[d] & 0\ar[d] \\
 &(G^k)^*(-C) \ar[r]^{\cong}\ar[d] & (G^k)^*(-C)   \ar[d]& \\
   0 \ar[r]  & (G^{k-1})^* \ar[d]\ar[r] &  (G^k)^*\ar[d]\ar[r] &  S^{k-1}T_{{{\mathbb P}^2}|C}(3)\ar[r]\ar[d]^{\cong} & 0\\
  0\ar[r]&  (R^{k}_C)^* \ar[r]\ar[d] & (G^k)^*_{|C}\ar[r]\ar[d] & S^{k-1}T_{{{\mathbb P}^2}|C}(3)\ar[r] &0\\
 & 0& 0&&}.
\end{equation}  

The bottom row of the above diagram is the dual of the exact sequence \eqref{pallino}
$$0 \longrightarrow S^{k-1}\Omega^1_{{{\mathbb P}^2}|C}(-3) \longrightarrow G^k_{|C}\longrightarrow R^k_C \longrightarrow 0,$$
 
since we have the isomorphism 
$$
(G^k_{|C})^* = {\mathcal Hom}_{{\mathcal O}_C }(G^k_{|C},{\mathcal O}_C)  \cong ({\mathcal Hom}_{{\mathcal O}_{{\mathbb P}^2} }(G^k,{\mathcal O}_{{\mathbb P}^2}))_{|C} = (G^k)^*_{|C}.
$$

This last isomorphism can be seen as follows: take the exact sequence 
$$0 \longrightarrow {{\mathcal O}_{{\mathbb P}^2} }(-C)  \longrightarrow {{\mathcal O}_{{\mathbb P}^2} }  \longrightarrow  {\mathcal O}_C  \longrightarrow  0$$
and apply ${\mathcal Hom}_{{\mathcal O}_{\mathbb{P}^2} }(G^k, \cdot)$. We get
$$0 \longrightarrow(G^k)^*(-C)  \longrightarrow (G^k)^*  \longrightarrow  {\mathcal Hom}_{{\mathcal O}_{{\mathbb P}^2} }(G^k,{\mathcal O}_{C})  \longrightarrow  0,$$
since $G^k$ is locally free. 
This shows that ${\mathcal Hom}_{{\mathcal O}_{{\mathbb P}^2} }(G^k,{\mathcal O}_{C}) \cong (G^k)^* _{|C}$. Hence, since we have ${\mathcal Hom}_{{\mathcal O}_{{\mathbb P}^2} }(G^k,{\mathcal O}_{C}) \cong  {\mathcal Hom}_{{\mathcal O}_C }(G^k_{|C},{\mathcal O}_C) $, we are done. 

Considering the two coboundary maps of the cohomology exact sequences of 
the two orizontal exact sequences of diagram \eqref{9}, we get the commutative diagram:

\begin{equation}
\label{10}
\xymatrix{
 & H^0(S^{k-1}T_{{{\mathbb P}^2}|C}(3)) \ar[d]^{\cong}\ar[r]^{\rho} &  H^1((G^{k-1})^*)\ar[d]\\
 & H^0(S^{k-1}T_{{{\mathbb P}^2}|C}(3)) \ar[r]^{(e_C^k)^*} & H^1((R_C^k)^*) &}.
\end{equation}  

Hence  to show that $e_C^k =0$, it suffices to prove that $\rho=0$. 

Equations \eqref{G} and \eqref{RX} for $k=0$  give the commutative diagram
\begin{equation}
\label{k=1}
\xymatrix{
  & &  & & 0\ar[d] \\
 & &0\ar[d]&0 \ar[d] & {\mathcal O}_{{\mathbb P}^2}(-3)   \ar[d]& \\
  & 0 \ar[r]  & R^1_{{\mathbb P}^2} \ar[d]\ar[r] &  H^0(L) \otimes {\mathcal O}_{{\mathbb P}^2} \ar[r]  \ar[d]^{\cong}&  L\ar[r]\ar[d]&0 \\
 & 0\ar[r]&  G^1 \ar[r] \ar[d] & H^0(K_C) \otimes  {\mathcal O}_{{\mathbb P}^2}\ar[r]& K_C\ar[r] \ar[d]&0\\
 & &{\mathcal H}^1:=  {\mathcal O}_{{\mathbb P}^2}(-3)\ar[d]  &&0\\
 &  & 0 & & },
\end{equation}

For all $k \geq 2$ we get the following commutative diagram

\begin{equation}
\label{11}
\xymatrix{
  & &  & & 0\ar[d] \\
 & &0\ar[d]&0 \ar[d] & S^{k-1}\Omega^1_{{\mathbb P}^2}(-3)   \ar[d]& \\
  & 0 \ar[r]  & R^{k}_{{\mathbb P}^2} \ar[d]\ar[r] &  R^{k-1}_{{\mathbb P}^2} \ar[r]  \ar[d]&  S^{k-1}\Omega^1_{{\mathbb P}^2} (d-3)\ar[r]\ar[d]&0 \\
 & 0\ar[r]&  G^k \ar[r] \ar[d] & G^{k-1}\ar[r]\ar[d]& S^{k-1}\Omega^1_{{{\mathbb P}^2}|C}(d-3)\ar[r] \ar[d]&0\\
0 \ar[r]&S^{k-1}\Omega^1_{{\mathbb P}^2}(-3)\ar[r] &{\mathcal H}^k\ar[d]  \ar[r]&{\mathcal H}^{k-1} \ar[d]\ar[r]&0\\
 &  & 0 & 0},
\end{equation}
where we denote by ${\mathcal H}^k$, ${\mathcal H}^{k-1}$ the cokernels of the vertical maps. Notice that they are vector bundles. This can be seen by induction on $k$, since ${\mathcal H}^1=  {\mathcal O}_{{\mathbb P}^2}(-3)$ and using the bottom exact sequence of diagram \eqref{11}. 

Dualising  we get 

\begin{equation}
\label{12}
\xymatrix{
   &  & & 0\ar[d] \\
  && & S^{k-1}T_{{\mathbb P}^2}(3-d)   \ar[d]& \\
   0 \ar[r]  & ({\mathcal H}^{k-1})^*  \ar[d]\ar[r] &  ({\mathcal H}^{k})^* \ar[r]  \ar[d]&  S^{k-1}T_{{\mathbb P}^2} (3)\ar[r]\ar[d]& 0\\
  0\ar[r] &  (G^{k-1})^* \ar[r]  & (G^{k})^*\ar[r]& S^{k-1}T_{{{\mathbb P}^2}|C}(3) \ar[r] \ar[d]& 0\\
  & & & 0}
\end{equation}
Taking the coboundary maps in cohomology we obtain 

\begin{equation}
\label{13}
\xymatrix{
  0\ar[d]& & & \\
     H^0(S^{k-1}T_{{\mathbb P}^2} (3)) \ar[d]^{\cong}\ar[r]^{\alpha} & H^1( ({\mathcal H}^{k-1})^* )\ar[d]& & \\
    H^0(S^{k-1}T_{{{\mathbb P}^2}|C}(3)) \ar[r]^{\rho}  \ar[d]& H^1( (G^{k-1})^*)& & \\
   0& & }
\end{equation}
since $H^1(S^{k-1}T_{{\mathbb P}^2} (3-d)) \cong (H^1(S^{k-1}\Omega^1_{{\mathbb P}^2} (d-6)))^*=0 $, and $H^0(S^{k-1}T_{{\mathbb P}^2} (3-d)) \cong( H^2(S^{k-1}\Omega^1_{{\mathbb P}^2} (d-6)))^*=0$ for $k \leq \frac{d-4}{2}$ (see \cite[Theorem 1.3.1]{et}.

So, to show that $\rho=0$, it suffices to prove that $\alpha =0$. 

We will show that $H^1( ({\mathcal H}^{k-1})^* ) \cong H^1( {\mathcal H}^{k-1}(-3))^* =0$, for all $k  \leq \frac{d-4}{2}$. 

In fact, by definition ${\mathcal H}^1 = {\mathcal O}_{{\mathbb P}^2}(-3)$, hence $$H^1({\mathcal H}^1(-3)) = H^1({\mathcal O}_{{\mathbb P}^2}(-6)) =0.$$
The last row of diagram \eqref{11} tensored by ${\mathcal O}_{{\mathbb P}^2}(-3)$ gives 

$$0 \rightarrow S^{k-2}\Omega^1_{{\mathbb P}^2}(-6) \rightarrow{\mathcal H}^{k -1}(-3)\rightarrow{\mathcal H}^{k-2}(-3) \rightarrow 0.$$
Since $H^1( S^{k-2}\Omega^1_{{\mathbb P}^2}(-6) )=0$,
by induction $H^1( {\mathcal H}^{k-2}(-3)) =0$, then  we get that $H^1( {\mathcal H}^{k-1}(-3)) =0$, forall $k \leq \frac{d-4}{2}$. 
This concludes the proof. 
\end{proof}

So we have  the following 

\begin{theorem}
\label{mainmain}
If $C$ is a smooth plane curve of degree $d \geq 6$,  for all  $0 \leq k \leq \frac{d-6}{2}$, we have 
$$rank(\gamma^k_C) = \frac{2k+3}{2}(d(d-3)-k(k+3)) = \frac{2k+3}{2}(2g-2 -k(k+3)),$$ 
$$corank(\gamma^k_C) = k(k+3)\frac{2k+3}{2}.$$

\end{theorem}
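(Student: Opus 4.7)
The plan is to combine the two estimates already obtained in Propositions \ref{1estimate} and \ref{main}, and then close the gap via a dimension count using Riemann--Roch.

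First, Proposition \ref{1estimate} gives the lower bound
$$\mathrm{rank}(\gamma^k_C) \geq \mathrm{rank}(p_2) = \frac{2k+3}{2}\bigl(d(d-3)-k(k+3)\bigr)$$
whenever $k \leq \frac{d-6}{2}$, obtained by combining the surjectivity of $\gamma^k_L$ (from \cite[Corollary 2.1]{ro}) with the surjectivity of $p_1$ (Proposition \ref{surjp1}) inside the commutative diagram \eqref{general}. Complementarily, Proposition \ref{main} yields
$$\mathrm{corank}(\gamma^k_C) \geq \mathrm{corank}(p_2) = k(k+3)\frac{2k+3}{2}$$
for $k \leq \frac{d-4}{2}$, by establishing that the image of $\gamma^k_C$ is contained in the image of $p_2$; this step uses the vanishing of the connecting map $e^k_C$, proved via a cohomological vanishing for the auxiliary sheaves $\mathcal{H}^{k-1}$ appearing as cokernels in diagram \eqref{11}.

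To finish, I would compute the dimension of the codomain. Since $C \subset \mathbb{P}^2$ has degree $d$, the adjunction/genus formula gives $2g-2 = d(d-3)$, so Riemann--Roch (together with $k+2 \geq 2$) yields
$$h^0(C,K_C^{\otimes(k+2)}) = (2k+3)(g-1) = \frac{2k+3}{2}\,d(d-3),$$
which is exactly $\mathrm{rank}(p_2) + \mathrm{corank}(p_2)$. Since also $\mathrm{rank}(\gamma^k_C) + \mathrm{corank}(\gamma^k_C) = h^0(C,K_C^{\otimes(k+2)})$, both of the previous inequalities are forced to be equalities, proving the stated formulas for $\mathrm{rank}(\gamma^k_C)$ and $\mathrm{corank}(\gamma^k_C)$.

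The substantive input is concentrated in Proposition \ref{main}, where the containment of images is obtained via the diagram \eqref{13} and the vanishing $H^1(\mathcal{H}^{k-1}(-3))=0$, valid precisely in the range $k \leq \frac{d-4}{2}$; this is the only nontrivial step of the present argument. Once that containment is granted, the combination with Proposition \ref{1estimate} and the Riemann--Roch count is a purely numerical matching.
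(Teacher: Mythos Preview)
Your proposal is correct and follows essentially the same approach as the paper, which simply cites Propositions \ref{1estimate} and \ref{main}. The one minor redundancy is the explicit Riemann--Roch computation: since Proposition \ref{main} already gives $\operatorname{image}(\gamma^k_C)\subseteq\operatorname{image}(p_2)$, you get $\operatorname{rank}(\gamma^k_C)\leq\operatorname{rank}(p_2)$ directly, which together with the reverse inequality from Proposition \ref{1estimate} forces equality without recomputing $h^0(K_C^{\otimes(k+2)})$---but your argument via the dimension count is of course equally valid.
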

\begin{proof}
The proof immediately follows from Propositions, \eqref{1estimate},  \eqref{main}. 
\end{proof}

\begin{remark}
Notice that the corank of $\gamma^k_C$ is independent of the degree of the plane curve, as it was already shown in  the case $k=1$ (\cite[Remark 4.9]{wahl}). 
\end{remark}

\section{Curves in a product of two curves}
\label{section4}

In this section we study certain higher Gaussian maps $\gamma^k_{X,L}$ for a surface $X=C_1 \times C_2$, where $C_1$ and $C_2$ are smooth projective curves of genus  $g_1$, respectively $g_2$. We prove their surjectivity under suitable assumptions on the degree of the line bundle $L$. Then we use this result to prove the surjectivity of the higher Gaussian maps $\gamma^k_C$ for certain curves $C$ lying on $X$.\\\\
We will use the following result \cite[Theorem 1.7]{el}.
\begin{theorem}
\label{einlaz}
Let $C$ be a smooth curve of genus $g$ and let $L$ and $M$ be line bundles of degree $d$ and $m$ respectively. Let $k \geq 1$ be an integer and assume that $d,m \geq (k+1)(g+1)$.
\begin{itemize}
\item[i] If $d+m \geq (k+1)(2g+2) +2g-1$, then $\gamma^k_{L,M}$ is surjective.
\item[ii] If $C$ is not hyperelliptic $d+m \geq (k+1)(2g+2) +2g-2$,  then $\gamma^k_{L,M}$ is surjective.
\end{itemize}
\end{theorem}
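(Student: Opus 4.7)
The plan is to reduce surjectivity of $\gamma^k_{L,M}$ to a cohomological vanishing on $C \times C$. From the short exact sequence \eqref{Ik} tensored by $p^*L \otimes q^*M$, the cokernel of $\gamma^k_{L,M}$ embeds into $H^1(C \times C, I^{k+1}_{\Delta_C} \otimes p^*L \otimes q^*M)$, so the entire task is to prove that this $H^1$ vanishes under the stated degree hypotheses.

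To unfold the ideal-sheaf power, I would pass to the blow-up $\sigma : Z \to C \times C$ along $\Delta_C$, with exceptional divisor $E$ a $\mathbb{P}^1$-bundle over $C \cong \Delta_C$. Standard identities give $\sigma_* \mathcal{O}_Z(-jE) = I^j_{\Delta_C}$ and $R^i\sigma_*\mathcal{O}_Z(-jE)=0$ for $i \geq 1$ (relative vanishing for small birational maps), so by the Leray spectral sequence the group to kill is isomorphic to $H^1(Z, \mathcal{L}_{k+1})$, where $\mathcal{L}_{k+1} := \sigma^*(p^*L \otimes q^*M) \otimes \mathcal{O}_Z(-(k+1)E)$.

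Next I would apply Kawamata--Viehweg vanishing on the smooth surface $Z$. Since $K_Z = \sigma^*(p^*K_C \otimes q^*K_C) + E$, the vanishing $H^1(Z,\mathcal{L}_{k+1}) = 0$ follows once the divisor
$$D := \sigma^*\bigl(p^*(L \otimes K_C^{-1}) \otimes q^*(M \otimes K_C^{-1})\bigr) - (k+2)E$$
is nef and big. The numerics on $Z$ are governed by $E \cdot (\sigma^*p^*\text{pt}) = 1$, $E^2 = -\deg K_C = -(2g-2)$, and by the pullbacks of fibers of $p$ and $q$. Testing $D$ against proper transforms of horizontal and vertical fibers turns nefness into the two separate lower bounds $d, m \geq (k+1)(g+1)$; testing against the proper transform of a general curve meeting $\Delta_C$ transversally yields the combined bound $d + m \geq (k+1)(2g+2) + (2g-1)$, and $D^2 > 0$ then gives bigness under the same inequalities.

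For part (ii), the improvement by one unit when $C$ is non-hyperelliptic should come from the very ampleness of $K_C$, which sharpens the test-curve analysis (or, equivalently, feeds into a slightly stronger vanishing using that $K_C$ separates points), allowing $2g-1$ to be replaced by $2g-2$. The main obstacle is the delicate intersection-theoretic bookkeeping on $Z$: tracking the $E^2$ contribution against all relevant test curves and verifying that the resulting system of inequalities matches exactly the numerical hypotheses, with the hyperelliptic/non-hyperelliptic gap arising from the gonality threshold that controls when the bound is sharp.
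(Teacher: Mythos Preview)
The paper does not prove this theorem at all: it is quoted verbatim from Bertram--Ein--Lazarsfeld \cite[Theorem 1.7]{el} and used as a black box. So there is no ``paper's own proof'' to compare against, and your task was really to sketch an independent argument.

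That said, your proposal contains a genuine error. Since $C$ is a curve, $C\times C$ is a surface and $\Delta_C$ is a \emph{divisor} in it. Blowing up a smooth variety along a Cartier divisor is the identity map, so your $\sigma:Z\to C\times C$ is an isomorphism and there is no $\mathbb{P}^1$-bundle exceptional divisor. In particular the canonical-bundle formula $K_Z=\sigma^*K_{C\times C}+E$ is wrong: one simply has $K_{C\times C}=p^*K_C\otimes q^*K_C$, with no exceptional correction. This shifts your divisor $D$ by one copy of $E$, so all the numerical bounds you extract are off. (The identities $I_{\Delta_C}^{\,j}=\mathcal{O}_{C\times C}(-j\Delta_C)$ do hold, precisely because $\Delta_C$ is Cartier; no blow-up is needed to see them.)

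Even after correcting this, the strategy of verifying that $p^*(L\otimes K_C^{-1})\otimes q^*(M\otimes K_C^{-1})\otimes\mathcal{O}(-(k+1)\Delta_C)$ is nef and big by ``testing against fibers and the diagonal'' is incomplete. The N\'eron--Severi group of $C\times C$ contains the diagonal as an extra class, and the nef cone of $C\times C$ is in general not known explicitly (its determination is tied to Seshadri-constant questions of Nagata type). Intersecting with $F_1$, $F_2$, and $\Delta$ does not certify nefness against all curves. The actual argument in \cite{el} avoids this by working with vector bundles on $C$ (kernels of evaluation maps $M_L$ and their filtrations) rather than Kodaira-type vanishing on the surface $C\times C$; this is what produces the exact bounds $(k+1)(g+1)$ and $(k+1)(2g+2)+2g-1$, with the non-hyperelliptic improvement coming from Green--Lazarsfeld's analysis of $M_{K_C}$.
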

In the following, we denote by $\Delta$ the diagonal in $X \times X$, and by $\Delta_i$, the diagonal in $C_i \times C_i$, $i=1,2$.
\begin{proposition} 
\label{proposition2323}
Let $C_i$, $i=1,2$,  be two smooth curves of genus $g_1$ and $g_2$, respectively. Take  $k \geq 1$ and let $L_1$ be a line bundle on $C_1$ of degree $l_1$ and $L_2$ be a line bundle on $C_2$ of degree $l_2$. Suppose that $g_i$ and $l_i$ satisfy the hypothesis of Theorem \ref{einlaz} with $d=m=l_1$ and $d=m=l_2$ and set $L=p_1^*L_1 \otimes p_2^*L_2$.
 Then the higher Gaussian map $\gamma_{X,L}^k$ is surjective. 
\end{proposition}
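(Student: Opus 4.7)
The plan is to exploit the product structure of $X=C_1\times C_2$ to reduce $\gamma^k_{X,L}$ to a sum of external products of lower-order one-dimensional Gaussian maps $\gamma^a_{L_1,L_1}\otimes\gamma^b_{L_2,L_2}$, each of which is surjective by Theorem \ref{einlaz}. First I would decompose the target. Since $\Omega^1_X=p_1^*K_{C_1}\oplus p_2^*K_{C_2}$, one has the canonical splitting $S^k\Omega^1_X=\bigoplus_{a+b=k}p_1^*K_{C_1}^{\otimes a}\otimes p_2^*K_{C_2}^{\otimes b}$, and the Künneth formula yields
\[H^0(X,S^k\Omega^1_X\otimes L^{\otimes 2})=\bigoplus_{a+b=k}H^0(C_1,K_{C_1}^{\otimes a}\otimes L_1^{\otimes 2})\otimes H^0(C_2,K_{C_2}^{\otimes b}\otimes L_2^{\otimes 2}).\]

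To produce sections mapping onto the $(a,b)$-summand, I identify $X\times X$ with $(C_1\times C_1)\times (C_2\times C_2)$ via the two projections $\pi_i\colon X\times X\to C_i\times C_i$. Under this identification $\Delta_X=\pi_1^{-1}(\Delta_1)\cap\pi_2^{-1}(\Delta_2)$ is a transverse intersection, so $I_{\Delta_X}=\pi_1^*I_{\Delta_1}+\pi_2^*I_{\Delta_2}$, and for every $a+b=k$ one obtains an inclusion $\pi_1^*I_{\Delta_1}^a\cdot\pi_2^*I_{\Delta_2}^b\subseteq I_{\Delta_X}^k$. Combined with the factorisation $p^*L\otimes q^*L=\pi_1^*(L_1\boxtimes L_1)\otimes\pi_2^*(L_2\boxtimes L_2)$, external multiplication $s_1\otimes s_2\mapsto \pi_1^*s_1\cdot\pi_2^*s_2$ gives natural maps $\mu_{a,b}$ from $H^0(C_1\times C_1,I^a_{\Delta_1}\otimes L_1\boxtimes L_1)\otimes H^0(C_2\times C_2,I^b_{\Delta_2}\otimes L_2\boxtimes L_2)$ into $H^0(X\times X,I^k_{\Delta_X}\otimes p^*L\otimes q^*L)$.

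The crucial step, and the one I expect to require the most care, is the compatibility claim that $\gamma^k_{X,L}\circ\mu_{a,b}$ lands in the $(a,b)$-Künneth summand of $H^0(S^k\Omega^1_X\otimes L^{\otimes 2})$ and coincides there with $\gamma^a_{L_1,L_1}\otimes\gamma^b_{L_2,L_2}$, while projecting to zero on the other summands. I would verify this in étale coordinates $(u,u')$ on $C_1\times C_1$ and $(v,v')$ on $C_2\times C_2$ near a point of $\Delta_X$: the section $\pi_1^*s_1\cdot\pi_2^*s_2$ expands as $(u-u')^a(v-v')^b f(u,u')g(v,v')$ modulo $I^{k+1}_{\Delta_X}$, and restriction to $I^k_{\Delta_X}/I^{k+1}_{\Delta_X}\cong S^k\Omega^1_X$ extracts precisely the $(a,b)$-monomial in the conormal directions, whose coefficient is $\gamma^a_{L_1,L_1}(s_1)\cdot \gamma^b_{L_2,L_2}(s_2)$.

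Finally, I observe that the hypotheses of Theorem \ref{einlaz} are monotone in the order: holding at level $k$ for $L_i$ implies they hold at every level $0\leq a\leq k$, so every $\gamma^a_{L_i,L_i}$ with $a\leq k$ is surjective (the boundary cases $a=0$ reduce to the surjectivity of multiplication maps, which follows from $l_i\geq 2g_i+1$, itself implied by the Ein--Lazarsfeld hypothesis at level $k\geq 1$). Tensor products of surjections between finite-dimensional complex vector spaces remain surjective, so the image of $\gamma^k_{X,L}\circ\mu_{a,b}$ fills the $(a,b)$-Künneth summand of the target. Summing over all $a+b=k$ then shows that $\gamma^k_{X,L}$ is surjective.
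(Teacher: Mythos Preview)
Your proposal is correct and follows essentially the same strategy as the paper: both reduce the surjectivity of $\gamma^k_{X,L}$ to the surjectivity of the tensor products $\gamma^a_{L_1}\otimes\gamma^b_{L_2}$ for $a+b=k$ via the K\"unneth decomposition of the target, and then invoke Theorem~\ref{einlaz} on each factor. The only cosmetic difference is that the paper packages the source as $H^0\bigl((\phi_1^*I_{\Delta_1}\oplus\phi_2^*I_{\Delta_2})^{\otimes k}\otimes L^{\boxtimes 2}\bigr)$ and restricts to the diagonal in one step, whereas you build the sections summand by summand via the inclusions $\pi_1^*I_{\Delta_1}^a\cdot\pi_2^*I_{\Delta_2}^b\subset I_{\Delta_X}^k$ and verify the compatibility in local coordinates; your explicit treatment of the monotonicity of the Ein--Lazarsfeld hypothesis and of the $a=0$ multiplication-map case is a welcome addition that the paper leaves implicit.
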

\begin{proof}
Recall from section \ref{section2} that  $\gamma^k_{X,L}$ is given by:
$$
\gamma^k_{X,L}: H^0(X \times X, I_{\Delta}^k \otimes p^*L \otimes  q^*L) \rightarrow H^0(X, I^k_{\Delta}\otimes_{\mathcal{O}_{X \times X}} \mathcal{O}_{\Delta}\otimes p^*L \otimes  q^*L).
$$
 Let $\phi_1: (C_1 \times C_2) \times  (C_1\times C_2) \rightarrow (C_1 \times C_1) $ and  $\phi_2: (C_1 \times C_2) \times  (C_1\times C_2) \rightarrow (C_2 \times C_2)$ be the projections. Observe that we have the following:
\begin{equation}
I^k_{\Delta}\otimes_{\mathcal{O}_{X \times X}} \mathcal{O}_{\Delta} \simeq (\phi_1^*I_{\Delta_1} \oplus  \phi_2^*I_{\Delta_2})^{\otimes k}\otimes_{\mathcal{O}_{X \times X}} \mathcal{O}_{\Delta}. 
\end{equation}
From this we obtain the following  commutative diagram:
\begin{center}
\begin{tikzcd}
H^0( \phi_1^{*}I_{\Delta_i} \oplus \phi_2^{*}I_{\Delta_2})^{\otimes k}  \otimes L^{\boxtimes^{2}})  \arrow[r, "\psi"] \arrow[d, ""]
    & H^0(( \phi_1^{*}I_{\Delta_1} \oplus \phi_2^{*}I_{\Delta_2})^{\otimes k} \otimes L^{\boxtimes^{2}} \otimes \mathcal{O}_{\Delta})\
    \arrow[d, "\simeq"] 
    \\
 H^0(I_{\Delta}^k \otimes L^{\boxtimes^{2}})  \arrow[r, black, "\gamma^k_{X,L}" black] 
&H^0(I_{\Delta}^k /I_{\Delta}^{k+1} \otimes L^{\boxtimes^{2}})  \end{tikzcd}
\end{center}
where we have denoted  by $L^{\boxtimes^{2}}=L \boxtimes L$ the tensor product $p^*L \otimes q^*L$. Then it is sufficient to show that $\psi$ is surjective. Now observe that $\psi$ decomposes as a direct sum of maps
\begin{center}
\begin{tikzcd}
 & H^0( (\phi_1^{*}I_{\Delta_i})^{\otimes i}   \otimes (\phi_2^{*}I_{\Delta_2})^{\otimes j}  \otimes L^{\boxtimes^{2}}))\arrow[d,"\psi_{i,j}"]
\\
 &  H^0( (\phi_1^{*}I_{\Delta_i})^{\otimes i}    \otimes (\phi_2^{*}I_{\Delta_2})^{\otimes j} \otimes L^{\boxtimes^{2}} \otimes \mathcal{O}_{\Delta}),
\end{tikzcd}
\end{center}
where $i,j$ vary among the pairs of non negative integers such that $i+j=k$. As in the proof of  \cite[Proposition 2.1.7]{faro}, using K\"unneth formula and the fact $L=p_1^*L_1 \otimes p_1^*L_2$ and $\mathcal{O}_{\Delta} \simeq 
\phi_1^*\mathcal{O}_{\Delta_1} \otimes \phi_2^*\mathcal{O}_{\Delta_2} $, $\psi_{i,j}$ becomes the tensor product $\gamma^i_{1,L_1} \otimes  \gamma^j_{2,L_2}$, where $\gamma^r_{i,L_i}$ is the $r$th Gaussian map on $C_i$ associated with the line bundle $L_i$, $i=1,2$. Since we are assuming that $g_i$ and  $deg(L_i)$ satisfy the hypothesis  of Theorem \ref{einlaz}, each of the Gaussian map is surjective and so each of the $\psi_{i,j}$ is. Hence we conclude that $\psi$ is surjective.
\end{proof}
\begin{remark}
More generally, following a similar approach as the one in Proposition \ref{proposition2323}, one can prove an analogous surjectivity statement for mixed higher Gaussian maps on  $X=C_1 \times C_2$, i.e. higher Gaussian maps associated with line bundles $L=p_1^*L_1 \otimes p_2^*L_2$ and $M=p_1^*M_1 \otimes p_2^*M_2$.
\end{remark}
For future convenience we state the following immediate corollary.
\begin{corollary}
Set $X=C_1 \times C_2$ and let $D_i$, $i=1,2$, be a divisor on $C_i$ of degree $d_i$.  Let $C \in |p_1^*(D_1) \otimes p_2^*(D_2)|$, where $p_i:X:=C_1 \times C_2 \ra C_i$ are the usual projections, and let $H$ be the line bundle $K_X(C)$ on $X$. If 
\begin{equation}
g_i \geq 0, d_i \geq kg_i + k+3, \ i=1,2,
\end{equation}
then the higher Gaussian map $\gamma_{X,H}^k$ is surjective. 
\end{corollary}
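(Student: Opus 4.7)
The plan is to reduce the claim to a direct application of Proposition \ref{proposition2323}, by decomposing $H = K_X(C)$ as an external product of line bundles on $C_1$ and $C_2$ and then verifying the Ein--Lazarsfeld bounds of Theorem \ref{einlaz} factor-by-factor.

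First, I will invoke the canonical bundle formula $K_X = p_1^*K_{C_1}\otimes p_2^*K_{C_2}$ for a product of curves, together with the identity $\mathcal{O}_X(C) = p_1^*\mathcal{O}_{C_1}(D_1)\otimes p_2^*\mathcal{O}_{C_2}(D_2)$, which holds because $C\in |p_1^*D_1\otimes p_2^*D_2|$. Setting $L_i := K_{C_i}(D_i)$, this rewrites
\[
H \;=\; p_1^*L_1 \,\otimes\, p_2^*L_2, \qquad \deg L_i \;=\; 2g_i - 2 + d_i,
\]
so $H$ has exactly the external-product form required by Proposition \ref{proposition2323}.

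Next, I will check the hypotheses of Theorem \ref{einlaz} for each $L_i$ with $d=m=\deg L_i$. The positivity condition $\deg L_i \geq (k+1)(g_i+1)$ is equivalent to $d_i \geq kg_i + k + 3 - g_i$, which follows from the assumption $d_i \geq kg_i + k + 3$ since $g_i \geq 0$. For case (i) of Theorem \ref{einlaz}, the inequality $2\deg L_i \geq (k+1)(2g_i+2) + 2g_i - 1$ simplifies to $d_i \geq kg_i + k + 5/2$, and since $d_i$ is an integer this is equivalent to $d_i \geq kg_i + k + 3$, exactly the standing hypothesis. Both conditions therefore hold for $L_1$ and $L_2$ in all ranges of $g_i$ allowed by the statement.

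Applying Proposition \ref{proposition2323} with $L = H$ then yields the surjectivity of $\gamma^k_{X,H}$. There is no genuine obstacle here: all the work is done by Proposition \ref{proposition2323}, and the corollary amounts to the purely arithmetic observation that the adjoint line bundle $K_X(C)$ is positive enough in each factor for the Ein--Lazarsfeld criterion to apply.
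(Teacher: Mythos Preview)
Your proof is correct and follows exactly the paper's approach: the paper's proof consists of the single line ``Apply Proposition \ref{proposition2323} with $L_i=K_{C_i}(D_i)$,'' and you have simply spelled out the arithmetic verification that the hypotheses of Theorem \ref{einlaz} are met for these $L_i$. The computations you give are accurate and make explicit what the paper leaves to the reader.
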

\begin{proof}
Apply Proposition \ref{proposition2323} with $L_i=K_{C_i}(D_i)$.
\end{proof}

\begin{theorem}
\label{surjffsrusi}
Let  $k \geq 2$ be an integer. Let $C_i$, $i=1,2$ be smooth projective algebraic curves of genus $g_i$, let $D_i$ be an effective divisor on $C_i$ of degree $d_i$. Denote by $p_i:X:=C_1 \times C_2 \ra C_i$  the projections and suppose that
\begin{itemize}
\item[1.] $g_1 \geq 2, g_2 \geq 1 $ or $g_1 \geq 1, g_2 \geq 2 $,  and $d_i \geq kg_i + k+3$ for $i=1,2$ or,

\item[2.] $g_1=0$, $g_2 \geq 2$, $d_1 > 2(k+1)$, $d_1 > \frac{kd_2}{g_2-1}$, $d_2 \geq kg_2 +k+3 $.

\end{itemize}
 Then for any irreducible smooth curve $C$ in the linear system  $ |p_1^*(D_1) \otimes p_2^*(D_2)|$,  $\gamma^k_{C} $ is surjective. 
\end{theorem}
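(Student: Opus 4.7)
The plan is to apply the commutative diagram recalled at the end of Section \ref{section2} with $X = C_1 \times C_2$, $H = p_1^*D_1 \otimes p_2^*D_2$, and $L = K_X \otimes H$, so that $L|_C = K_C$ by adjunction. Since the diagram identifies $\gamma^k_C$ precomposed with the left restriction map with the composition $p_2 \circ p_1 \circ \gamma^k_{X,L}$, it is enough to prove surjectivity of each of the three maps $\gamma^k_{X,L}$, $p_1$, and $p_2$.

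For $\gamma^k_{X,L}$, $L$ splits as $p_1^*L_1 \otimes p_2^*L_2$ with $L_i = K_{C_i}(D_i)$, and I would invoke Proposition \ref{proposition2323}. In case (1), the bound $d_i \geq k g_i + k + 3$ matches exactly the Ein--Lazarsfeld conditions of Theorem \ref{einlaz} on each factor, as a short numerical check on $\deg L_i = 2g_i - 2 + d_i$ shows. In case (2) the $\mathbb{P}^1$-factor lies outside the EL range, but higher Gaussian maps on $\mathbb{P}^1$ are the classical multiplication maps $H^0(\mathcal{O}(d_1 - 2 - i)) \otimes H^0(\mathcal{O}(d_1 - 2 - i)) \to H^0(\mathcal{O}(2d_1 - 2i - 4))$, surjective whenever $d_1 - 2 \geq i$, which follows from $d_1 > 2(k+1)$; and $d_2 \geq k g_2 + k + 3$ takes care of $C_2$.

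For $p_1$ and $p_2$, the restriction sequence for $S^k\Omega^1_X(2L)$, the $k$-th symmetric power \eqref{p2} of the conormal sequence tensored by $K_C^{\otimes 2}$, and the analogous restriction sequence for $S^{k-1}\Omega^1_X(2L-C)$ reduce these two surjectivities to the three cohomology vanishings
\[
H^1\bigl(X, S^k\Omega^1_X(2L-C)\bigr) = 0, \quad H^1\bigl(X, S^{k-1}\Omega^1_X(2L-C)\bigr) = 0, \quad H^2\bigl(X, S^{k-1}\Omega^1_X(2K_X)\bigr) = 0,
\]
where in the last I have used $2L-2C = 2K_X$. Since $\Omega^1_X = p_1^*K_{C_1} \oplus p_2^*K_{C_2}$, every $S^m\Omega^1_X$ splits as a direct sum of line bundles pulled back from the factors, and K\"unneth decomposes each cohomology group as a direct sum of tensor products of cohomologies on $C_1$ and $C_2$. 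The $H^2$-vanishing is best handled by Serre duality, which turns it into the vanishing of $H^0(X, S^{k-1}T_X \otimes (-K_X))$: every K\"unneth summand contains a factor $-(j+1)K_{C_i}$ of negative degree on whichever curve $C_i$ satisfies $g_i \geq 2$ (such an $i$ exists in both cases), and therefore has trivial $H^0$.

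The hard point will be the summand-by-summand verification of the two $H^1$-vanishings in the boundary cases, where the twist on one factor has degree close to the critical value. The numerical hypotheses are tailored precisely for this: in case (1), $d_i \geq k g_i + k + 3$ ensures degree $> 2g_i - 2$ on every summand and every factor; in case (2), $d_1 > 2(k+1)$ forces $H^1(\mathbb{P}^1, \mathcal{O}(d_1 - 4 - 2i)) = 0$ for all $i \leq k$, while $d_2 \geq k g_2 + k + 3$ handles the $C_2$-factor, and the extra inequality $d_1 > \tfrac{k d_2}{g_2 - 1}$ enters on the mixed summand where the $\mathbb{P}^1$-degree is delicately balanced against the $C_2$-twist. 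Once these vanishings are established, the commutativity of the diagram together with the three surjectivities immediately yields the surjectivity of $\gamma^k_C$.
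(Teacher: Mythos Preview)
Your overall architecture---reduce to the three surjectivities $\gamma^k_{X,L}$, $p_1$, $p_2$ via the diagram of Section~\ref{section2}---is exactly the paper's, and your treatment of $\gamma^k_{X,L}$ and $p_1$ matches the paper's (Proposition~\ref{proposition2323} plus K\"unneth on $X$).

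The one genuine difference is in how you handle $p_2$. The paper works directly on $C$: it computes the degree of each line-bundle summand of $S^{k-1}\Omega^1_{X|C}\otimes K_C^{\otimes 2}(-C)$ and compares it to $2g(C)-2$. Because the degree on $C$ of a bundle pulled back from $C_1$ involves $d_2$ (and vice versa), the resulting inequality in case~(2) mixes the two factors, and this is precisely where the hypothesis $d_1 > \tfrac{kd_2}{g_2-1}$ enters.

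Your approach instead lifts to $X$ via the restriction sequence, reducing to $H^1(X, S^{k-1}\Omega^1_X\otimes K_X^{\otimes 2}(C))=0$ and $H^2(X, S^{k-1}\Omega^1_X\otimes K_X^{\otimes 2})=0$. This is correct, but your final sentence misidentifies where $d_1 > \tfrac{kd_2}{g_2-1}$ is used: in your K\"unneth decomposition on $X$ there are no ``mixed summands'' at all---every term is a tensor product $H^p(C_1,\cdot)\otimes H^q(C_2,\cdot)$, and each factor vanishes under the \emph{other} hypotheses alone (the $H^2$ because one $g_i\geq 2$ kills $H^1(K_{C_i}^{j+2})$, the $H^1$ because $d_1>2(k+1)$ and $g_2\geq 2$ already suffice). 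So your argument, carried through, never uses $d_1 > \tfrac{kd_2}{g_2-1}$ and in fact proves case~(2) without it. That is a small improvement over the paper's bound, but you should remove the incorrect attribution of that hypothesis.
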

\begin{proof}
Consider the following commutative diagram
\begin{center}
	\begin{equation}\label{diagrammadue}
		\begin{tikzcd}[sep=tiny]
			H^0(X\times X,\mathcal{I}^k_{\Delta_X}( H\boxtimes H ) )	\arrow{dd}\arrow{r}{\gamma_{X,H}^k}&  H^0(X, S^k  \Omega^1_X \otimes H^{\otimes 2})\arrow{dr}{p_1}&\\
			&  & H^0(C, (S^k  \Omega^1_X \otimes H^{\otimes 2})\restr{C})\arrow{dl}{p_2}\\
			H^0(C\times C,\mathcal{I}^k_{\Delta_C}(K_C\boxtimes K_C) )\arrow{r}{\gamma^k_{C}} &H^0(C,K_C^{\otimes (k+2)})&
		\end{tikzcd}
	\end{equation}
 \end{center}
where  $H$ denotes the line bundle $K_X(C)$, 
$I_{\Delta_{X}}$ and $I_{\Delta_{C}}$  are  ideal sheaves  of the diagonals in $X \times X$ and $C \times C$, respectively. As in section \ref{section2}, the vertical  arrow and $p_1$ are restriction maps and  $p_2$ comes 
from the exact sequence \eqref{p2} tensored by $K_C^{\otimes 2}$.

The surjectivity of $\gamma^k_{C}$ will follow from the surjectivity of $\gamma^k_{X,H}$, $p_1$ and $p_2$. 
\
\\
\
\\
\textbf{Surjectivity of} $\gamma^k_{X,H}$.
\
\\
This follows immediately from Proposition \ref{proposition2323}.
\
\\
\
\\
\textbf{Surjectivity of} $p_1$.
\
\\
In  order to show the surjectivity of $p_1$ it is enough to prove that $H^1(\Sym^k \Omega^1_X \otimes K_X^{\otimes 2}(C))=0$. Denoted by  $K_i$ the canonical bundle of ${C_{i}}$ for $i=1,2$ observe that
\begin{equation}
K_X^{\otimes 2}(C)=p_1^*(K_1^{\otimes 2}) \otimes p_2^*(K_2^{\otimes 2})(C) 
\end{equation}
\begin{equation}
S^k \Omega^1_X \simeq S^k(p_1^*K_1 \oplus p_2^*K_2) \simeq \bigoplus \limits_{\substack{i+j=k \\ i,j \geq 0}} p_1^*(K_1^{\otimes i}) \otimes p_2^*(K_2^{\otimes j}). 
\end{equation}
Hence
\begin{equation}
S^k \Omega^1_X \otimes K_X^{\otimes 2}(C)= \bigoplus \limits_{\substack{i+j=k \\ i,j \geq 0}} (p_1^*(K_1^{\otimes {(i+2)}}(D_1)) \otimes p_2^*(K_2^{\otimes {(j+2)}}(D_2))
\end{equation} 
Then by K\"unneth formula we have that
\begin{align}
&H^1
 (S^k \Omega^1_X \otimes K_X^{\otimes 2}(C))=H^1( \bigoplus \limits_{\substack{i+j = k \\ i,j \geq 0}} (p_1^*(K_1^{\otimes {(i+2)}}(D_1)) \otimes p_2^*(K_2^{\otimes {(j+2)}}(D_2))).  \nonumber
 \\ \nonumber
 &\simeq \bigoplus \limits_{\substack{i+j=k \\ i,j \geq 0}} H^1 (K_1^{\otimes {(i+2)}}(D_1)) \otimes H^0(K_2^{\otimes {(j+2)}}(D_2)) \oplus \bigoplus \limits_{\substack{i+j=k \\ i,j \geq 0}} H^0(K_1^{\otimes {(i+2)}}(D_1)) \otimes H^1(K_2^{\otimes {(j+2)}}(D_2)) \\ \nonumber
 &\simeq \bigoplus \limits_{\substack{i+j=k \\ i,j \geq 0}} H^1 (K_1^{\otimes {(i+2)}}(D_1)) \otimes H^0(K_2^{\otimes {(j+2)}}(D_2)) \oplus \bigoplus \limits_{\substack{i+j=k \\ i,j \geq 0}} H^0(K_1^{\otimes {(i+2)}}(D_1)) \otimes H^1(K_2^{\otimes {(j+2)}}(D_2)). \\ \nonumber
\end{align}
Now observe that since by Serre duality
\begin{equation*}
H^1 (K_1^{\otimes{(i+2)}}(D_1)) \simeq H^0(K_1^{\otimes {(-i-1)}}(-D_1))  \ \text{and} \ H^1 (K_2^{\otimes {(j+2)}}(D_2)) \simeq H^0(K_2^{\otimes {(-j-1)}}(-D_2)),
\end{equation*}
to have the desired vanishing it is sufficient that for any $0  \leq i,j \leq k$ 
\begin{align}
 d_1 > -(i+1)(2g_1-2); \nonumber
 \\ \nonumber
  d_2 > -(j+1)(2g_2-2).
 \end{align}
 If $g_1, g_2 >0$,  these of course occur for every $d_1,d_2 \geq 1$. If $g_1=0$ $(g_2=0)$ the conditions become
\begin{equation}
d_1 > 2(k+1), \ \ \ d_2 > 2(k+1). 
\end{equation}
\textbf{Surjectivity of} $p_2$
\
\\
In order to show to the surjectivity of $p_2$ by \eqref{p2} it is enough to prove that $H^1(S^{k-1} \Omega^1_{X_{|_{C}}}  \otimes K_X^{\otimes 2}(C) )=0$. Observe that 
\begin{equation}
S^{k-1}\Omega^1_{X_{|_{C}}}  \otimes K_C^{\otimes 2}(-C)=S^{k-1}((p_1^*K_{C_1} \oplus p_2^*K_{C_2})_{|_{C}})  \otimes K_C^{\otimes 2}(-C),
\end{equation}
which is isomorphic to 
\begin{equation}
\bigoplus \limits_{\substack{i+j=k-1 \\ i,j \geq 0}} p_1^*K_{C_1}^{\otimes{(i+2)}}(D_1)_{|_{C}} \otimes p_2^*K_{C_2}^{\otimes{(j+2)}}(D_2)_{|_{C}}.
\end{equation}
Then we get
$$
H^1(S^{k-1}\Omega^1_{X_{|_{C}}}  \otimes K_C^{\otimes 2}(-C))=\bigoplus \limits_{\substack{i+j=k-1 \\ i,j \geq 0}} H^1(p_1^*K_{C_1}^{\otimes{(i+2)}}(D_1)_{|_{C}} \otimes p_2^*K_{C_2}^{\otimes {(j+2)}}(D_2)_{|_{C}}).
$$
Again, by Serre duality, in order to have the vanishing of $H^1$, it is enough that 
$$
2g(C)-2 < \text{deg} (p_1^*K_{C_1}^{\otimes{(i+2)}}(D_1)_{|_{C}} \otimes p_2^*K_{C_2}^{\otimes{(j+2)}}(D_2)_{|_{C}}),
$$ 
 i.e.
$$
(2g_1-2)d_2+(2g_2-2)d_1+2d_1d_2 < (i+2)(2g_1-2)d_2+(j+2)(2g_2-2)d_1+2d_1d_2
$$
for every $i,j$. If  $g_1 \geq 2$ and $ g_2 \geq 1$ or viceversa then the conditions hold for every $d_1, d_2 \geq 1$. If $g_1=0$, $g_2 \geq 1$  the conditions become
\begin{equation}
-2d_2+(2g_2-2)d_1< -(i+2)2d_2+(j+2)(2g_2-2)d_1
\end{equation}
for every $i, j \geq 0 , i+j=k-1$, which is equivalent to the condition relative to $i=k-1$, $j=0$. That is
\begin{equation*}
(g_2-1)d_1>d_2k.
\end{equation*}
Then if $g_1=0$ we need $g_2 \geq 2$ and $d_1>\frac{kd_2}{(g_2-1)}$. Analogously if $g_2=0$ we need $g_1 \geq 2$ and $d_2 > \frac{kd_1}{(g_1-1)}$.  Comparing all the conditions we conclude.
\end{proof}
\begin{remark}
\label{remarkdsdsre4}
If the divisors $D_1$ and $D_2$ in  Proposition \ref{surjffsrusi} are general divisors on $C_1$ and $C_2$, then the linear system $|p_1^*(D_1) \otimes p_2^*(D_2)|$  is base-point-free, and  there actually exists a smooth irreducible curve $C \in |p_1^*(D_1) \otimes p_2^*(D_2)|$. This follows from the fact that under the hypothesis of Proposition \ref{surjffsrusi}, we always  have $d_i \geq g_i+1$, and we use classical Brill-Noether theory. The general curve $C \in |p_1^*(D_1) \otimes p_2^*(D_2)|$ is a smooth curve of genus
$$
g(C)=1+(g_2-1)d_1+(g_1-1)d_2+d_1d_2.
$$
In particular the lowest genus (depending on $k$)  is obtained by choosing  $g_1=0$ and $g_2=2$, $d_1=3k+3$ and $d_2=2k+3$. This is 
$$
6k^2+17k+13.
$$
Notice that  this is  higher than the bound in \cite[Theorem D]{ro}.
\end{remark}
%
A consequence of the previous Proposition is the following.
\begin{corollary}
\label{surhjcewahalgaussi}
Let $k \geq 2$. For all $g_i$ and $d_i$ satisfying the hypothesis of  Proposition \ref{surjffsrusi} the general curve of genus  
\begin{equation}
g=1+(g_2-1)d_1+(g_1-1)d_2+d_1d_2,
\end{equation}
has surjective $k$th Gaussian-Wahl map.
\end{corollary}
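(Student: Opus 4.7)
The plan is to reduce the corollary to Theorem \ref{surjffsrusi} via the adjunction formula plus a standard semicontinuity argument on $\mathcal{M}_g$. First I would fix $g_1,g_2,d_1,d_2$ satisfying the hypotheses of Theorem \ref{surjffsrusi}. By Remark \ref{remarkdsdsre4} the linear system $|p_1^*(D_1)\otimes p_2^*(D_2)|$ on $X=C_1\times C_2$ contains a smooth irreducible curve $C$ (for generic choices of divisors $D_i$, this is base-point-free, and smooth irreducible members exist by Bertini). By adjunction $2g(C)-2 = C\cdot(C+K_X)$ with $C^2=2d_1d_2$ and $C\cdot K_X = d_1(2g_2-2)+d_2(2g_1-2)$, which yields exactly
$$g(C)=1+(g_2-1)d_1+(g_1-1)d_2+d_1d_2.$$

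Next, Theorem \ref{surjffsrusi} applies and gives $\gamma^k_C$ surjective. So I have exhibited at least one smooth curve $[C]\in \mathcal{M}_g$ for which the $k$-th Gaussian-Wahl map is surjective.

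To upgrade this to the general curve of genus $g$, I would invoke semicontinuity. Consider the universal curve $\pi\colon \mathcal{C}\to \mathcal{M}_g$ and the fibre product $\mathcal{C}\times_{\mathcal{M}_g}\mathcal{C}$ with diagonal $\Delta$. The sheaves $\pi_*(I^k_\Delta\otimes K^{\boxtimes 2}_{\mathcal{C}/\mathcal{M}_g})$ and $\pi_*(K_{\mathcal{C}/\mathcal{M}_g}^{\otimes (k+2)})$ (replacing $\mathcal{M}_g$ by a suitable étale cover if needed) fit into a morphism of coherent sheaves whose fibre over $[C]$ is $\gamma^k_C$. The target has constant rank $(2k+3)(g-1)$ for $k\geq 1$ by Riemann--Roch, so the locus
$$U_k:=\{[C]\in \mathcal{M}_g : \gamma^k_C \text{ is surjective}\}$$
is Zariski open by semicontinuity of the corank. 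Since I have shown $U_k\neq\emptyset$, it is dense in $\mathcal{M}_g$, proving the claim.

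The argument is essentially a packaging of already established results, so there is no real obstacle; the only point that deserves care is making the semicontinuity step precise, since the source of $\gamma^k$ has variable dimension in families. This is handled by noting that the rank of a morphism of coherent sheaves is lower semicontinuous on the base, so the condition that $\gamma^k_C$ surjects onto a target of constant rank cuts out an open set in the moduli space.
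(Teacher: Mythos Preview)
Your argument is correct and matches the paper's approach: the paper states the corollary without proof, treating it as an immediate consequence of Theorem \ref{surjffsrusi} and Remark \ref{remarkdsdsre4}, and you have simply made explicit the two implicit steps (the adjunction computation of $g(C)$ and the semicontinuity argument showing that surjectivity of $\gamma^k$ is an open condition on $\mathcal{M}_g$). There is nothing to add.
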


\section{Higher Gaussian maps for curves on Enriques surfaces}
\label{section5}
In this section we show the surjectivity of the higher Gaussian maps $\gamma^k_C$ when $C$ is a curve lying on a sufficiently positive linear system of a unnodal Enriques surface. We recall that an Enriques surface is called unnodal if it does not contain any $-2$ curves. The general Enriques surface is unnodal. 

Before going into the proof we recall the definition of the $\phi$-function. In the following $X$ will always be an Enrique surface.
\begin{definition}
Let $H$ be a line bundle on $X$ with $H^2 >0$. Then 
\begin{equation}
\phi(H):=\{H \cdot F: F \in \operatorname{Pic}(X), \  F^2=0, \ F \not\equiv 0 \},
\end{equation}
\end{definition}
where $\operatorname{Pic}(X)$ is the Picard group of $X$ and $â€˜â€˜\equiv"$ stands for numerically equivalent. Recall that  if $X$ is an Enriques surface, then $K_X^{\otimes 2} \simeq \mathcal{O}_X$. In particular $\phi(H)=\phi(H \otimes K_X)$ for any $H.$
\begin{theorem}
\label{enriques}
Let $X$ be an unnodal Enriques surface and $k \in \mathbb{N}$. If $C$ is a smooth curve with $\phi(\mathcal{O}_X(C)) > 4(k+2)  $, then $\gamma^k_C$ is surjective. If $k=1$ it is sufficient to require. $\phi(\mathcal{O}_X(C)) > 6 $
\end{theorem}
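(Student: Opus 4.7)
The plan is to run the same three-step factorization employed in Sections \ref{section3} and \ref{section4}. Set $H = \mathcal{O}_X(C)$ and $L = K_X \otimes H$, so that $L\restr{C} = K_C$ by adjunction, and consider the commutative diagram of Section \ref{section2} in this setting. Surjectivity of $\gamma^k_C$ is then implied by the simultaneous surjectivity of $\gamma^k_L$, of the restriction $p_1 \colon H^0(X, S^k\Omega^1_X(2L)) \to H^0(C, S^k\Omega^1_X(2L)\restr{C})$, and of the map $p_2$ induced by the conormal sequence \eqref{p2} tensored with $K_C^{\otimes 2}$.

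A key simplification on an Enriques surface is that $2K_X \sim 0$ in $\operatorname{Pic}(X)$, so $\mathcal{O}_X(2L - C) \simeq \mathcal{O}_X(C)$ and $\mathcal{O}_X(2L - 2C) \simeq \mathcal{O}_X$. Consequently $p_1$ is surjective once $H^1(X, S^k\Omega^1_X \otimes \mathcal{O}_X(C)) = 0$, while, applying the restriction sequence on $X$ to $S^{k-1}\Omega^1_X(2L - C)$, the surjectivity of $p_2$ reduces to the vanishing of $H^1(X, S^{k-1}\Omega^1_X \otimes \mathcal{O}_X(C))$ together with that of $H^2(X, S^{k-1}\Omega^1_X)$. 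For $\gamma^k_L$ itself, an inductive filtration via the sequences \eqref{R} tensored by $L$ further reduces the argument to the same style of vanishings on $X$, exactly as was done in \cite{farospelta} for the Prym-canonical case.

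The main ingredient is therefore the vanishing of $H^i(X, S^m\Omega^1_X \otimes M)$ for $i \geq 1$, $m \leq k$, and $M$ an explicit small twist of $\mathcal{O}_X(C)$ (possibly also by $K_X$, which is numerically trivial). On an unnodal Enriques surface such vanishings are governed by the invariant $\phi$: the Bogomolov-type arguments invoked in \cite{farospelta} ensure that $S^m\Omega^1_X \otimes M$ has no higher cohomology once $M \cdot F$ exceeds a suitable linear function of $m$ for every nef primitive class $F$ with $F^2 = 0$. Since $K_X \cdot F = 0$ for every such $F$, a line bundle of the form $\mathcal{O}_X(aC + bK_X)$ satisfies $\phi(\mathcal{O}_X(aC + bK_X)) = a\,\phi(\mathcal{O}_X(C))$, and each required vanishing translates into a lower bound on $\phi(\mathcal{O}_X(C))$ alone.

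The hardest step, I expect, will be the bookkeeping: tracking every cohomology group that must vanish as one peels off the filtrations coming from \eqref{R} and from the conormal sequence, and verifying that the single bound $\phi(\mathcal{O}_X(C)) > 4(k+2)$ uniformly dominates each individual numerical condition. The factor $4(k+2)$ should emerge naturally by combining the shift by $2L$ (two copies of $C$) with the Bogomolov threshold for $S^k\Omega^1_X$, which is linear in $k$. The sharper bound $\phi > 6$ when $k=1$ can be obtained by invoking directly the first-Wahl-map vanishing for Enriques surfaces already established in \cite{farospelta}, whose proof uses only $\Omega^1_X$ and therefore avoids the inductive loss inherent in larger $k$.
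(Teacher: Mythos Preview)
Your three-step factorization and the reductions for $p_1$ and $p_2$ match the paper's proof: $p_1$ is handled by $H^1(S^k\Omega^1_X(C))=0$ (which the paper cites from \cite[Lemma 4.4]{farospelta}), and $p_2$ by $H^1(S^{k-1}\Omega^1_{X|C}(C))=0$ (cited from \cite[4.7]{farospelta}, with \cite[Remark 4.5]{farospelta} giving the sharper bound for $k=1$).

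The genuine divergence is in the surjectivity of $\gamma^k_{X,H}$. You propose to obtain it by an inductive filtration through the sequences \eqref{R}, reducing to further vanishings for $S^m\Omega^1_X$ twisted by copies of $C$. The paper does \emph{not} do this. Instead it invokes \cite[Theorem A]{ro}, which reduces surjectivity of $\gamma^k_{X,H}$ to the single vanishing $H^1(X^{[2]},\tilde H-(k+2)B)=0$ on the Hilbert scheme of two points, and then applies Kodaira vanishing together with the ampleness criterion of \cite[Corollary 4.2]{farospelta}. The bound $\phi(\mathcal{O}_X(C))>4(k+2)$ is exactly the threshold that makes $\widetilde{H-K_X}-(k+2)B$ ample on $X^{[2]}$; it is not obtained by bookkeeping through a filtration on $X$. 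Your statement that the filtration argument is ``exactly as was done in \cite{farospelta}'' is therefore inaccurate: the cited results from \cite{farospelta} already work through $X^{[2]}$.

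Your route may well be feasible, but as written it is only a heuristic: you do not control the $R^1p_*$ terms in \eqref{R}, you do not verify $H^2(S^{k-1}\Omega^1_X)=0$, and you give no derivation of the constant $4(k+2)$ from the Bogomolov-type inequalities you allude to. The paper's Hilbert-scheme approach sidesteps all of this by packaging the entire $\gamma^k_{X,H}$ step into one Kodaira vanishing on $X^{[2]}$, which is what makes the numerical bound clean and explicit.
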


\begin{proof}
The proof  again considers  the following commutative diagram.
\begin{center}
	\begin{equation}
		\begin{tikzcd}[sep=tiny]
			H^0(X\times X,\mathcal{I}^k_{\Delta_X}( H\boxtimes H ) )	\arrow{dd}\arrow{r}{\gamma_{X,H}^k}&  H^0(X, S^k  \Omega^1_X \otimes H^{\otimes 2})\arrow{dr}{p_1}&\\
			&  & H^0(C, (S^k  \Omega^1_X \otimes H^{\otimes 2})\restr{C})\arrow{dl}{p_2}\\
			H^0(C\times C,\mathcal{I}^k_{\Delta_C}(K_C\boxtimes K_C) )\arrow{r}{\gamma^k_{C}} &H^0(C,K_C^{\otimes (k+2)})&
		\end{tikzcd}
	\end{equation}
 \end{center}
where  $H$ denotes the line bundle $K_X \otimes \mathcal{O}_X(C)$. As usual we show the surjectivity of $\gamma^k_{X,H}$, $p_1$ and $p_2$.

Consider the Hilbert scheme of two points $\operatorname{X}^{[2]}$ of  $X$. We recall that $\operatorname{Pic}(\operatorname{X}^{[2]}) \simeq \operatorname{Pic}(X) \oplus \mathbb{Z}B$, where $2B$ is the exceptional divisor of the Hilbert-Chow morphism $\operatorname{X}^{[2]} \rightarrow X^{(2)}$, and $X^{(2)}$ is the second symmetric product. In the following, if $L \in \operatorname{Pic}(X)$, we denote by $\tilde{L}$ the corresponding line bundle on $\operatorname{X}^{[2]}$.

By \cite[Theorem A]{ro}, for the surjectivity of $\gamma^k_{X,H}$,  it is sufficient to show that $H^1(\operatorname{X}^{[2]},\tilde{H}-(k+2)B)=0$. This immediately follows from Kodaira vanishing since by \cite[Corollary  4.2]{farospelta}
$\tilde{H}-(k+2)B-K_{\operatorname{X}^{[2]}}=\widetilde{H-K_X}-(k+2)B$ is ample.   

As in the previous sections, in  order to show the surjectivity of $p_1$,  it is enough to prove that $H^1(S^k \Omega^1_X \otimes K_X^{\otimes 2}(C))=H^1(S^k \Omega^1_X(C))=0$. This follows immediately from \cite[Lemma 4.4]{farospelta}.

Finally, recall that $p_2$ comes for the $k$th symmetric power of the conormal exact sequence twisted bt $H^{\otimes 2}$:
\begin{equation*}
    0 \rightarrow S^{k-1} \Omega^1_{X_{\mid C}}(C) \rightarrow  S^k \Omega^1_{X_{\mid C}}(2C)\rightarrow  K_C^{\otimes{(k+2)}} \rightarrow  0.
\end{equation*}
Hence, as usual, to show the surjectivity of $p_2$ is enough to prove that $H^1(X,S^{k-1} \Omega^1_{X_{\mid C}}(C))=0$. For the proof of this vanishing see \cite[4.7]{farospelta} and  \cite[Remark 4.5]{farospelta} for the case $k=1$.

\end{proof}

\end{document}